\numberwithin{equation}{section}
\tikzset{every tree node/.style={align=center,anchor=north},
         level distance=2cm} 
\newtheorem{lemma}{Lemma}[section]
\newtheorem{proposition}[lemma]{Proposition}
\newtheorem{theorem}[lemma]{Theorem}
\newtheorem{definition}{Definition}[section]
\newtheorem{remark}[lemma]{Remark}
\newtheorem{claim}[lemma]{Claim}
\newcommand {\R} {\mathbb{R}} 
 \newcommand {\N} {\mathbb{N}}
\newcommand{\diff}{\hspace{0.3em}\mathrm{d}}
\newcommand {\p} {\partial}
\newcommand{\de}{\mathcal{G}}
\newcommand{\di}{\Omega_D}
\newcommand{\F}{\mathcal{F}}
\newcommand{\G}{\mathcal{G}}
\begin{document}


\title{\Large\bf{Stable determination of an anisotropic inclusion in the Schr\"odinger equation from local Cauchy data}}
\author{Sonia Foschiatti \thanks{Dipartimento di Matematica e Geoscienze, Universit\`a degli Studi di Trieste, Italy, sonia.foschiatti@phd.units.it}
\qquad Eva Sincich\thanks{Dipartimento di Matematica e Geoscienze, Universit\`a degli Studi di Trieste, Italy, esincich@units.it}}
\date{}
\maketitle
		
\begin{abstract}
\noindent We consider the inverse problem of determining an inclusion contained in a body for a Schr\"odinger type equation by means of local Cauchy data. Both the body and the inclusion are made by inhomogeneous and anisotropic materials. Under mild a priori assumptions on the unknown inclusion, we establish a logarithmic stability estimate in terms of the local Cauchy data. In view of possible applications, we also provide a stability estimate in terms of an ad-hoc misfit functional.
\end{abstract}
		
\vskip 0.3 cm 
\noindent \textbf{Keywords:} Inverse conductivity problem, inclusion, stability, anisotropic conductivity, generalized Schr\"odinger equation, Cauchy data, misfit functional.


\section{Introduction}\label{sec1}

\setcounter{equation}{0} 
\noindent The paper adresses the inverse boundary value problem of determining an inclusion $D$ contained in a body $\Omega$ for a Schr\"odinger type equation by means of complete measurements on a portion $\Sigma$ of the boundary. More precisely, we assume that $\Omega$ is a bounded domain in $\R^n$, $n\geq 3$ and $D$ is an open set contained in $\Omega$. Let both the body $\Omega$ and the inclusion $D$ be made by different inhomogeneous and anisotropic materials. For any $f\in H^{\frac{1}{2}}_{00}(\Sigma)$, consider the weak solution $u\in H^1(\Omega)$ to the Dirichlet problem

\begin{equation}\label{main}
\begin{cases}
\mbox{div}(\sigma \nabla u) + q u = 0, &\text{in }\Omega,\\
u = f &\text{on }\p \Omega.
\end{cases}
\end{equation}
with
\begin{equation}\label{conductivity}
\sigma(x)=\left(a_b(x)+(a_D(x)-a_b(x))\chi_D(x)\right)A(x)
\end{equation}
and 
\begin{equation}\label{potential}
q(x)=q_b(x)+(q_D(x)-q_b(x))\chi_D,
\end{equation}
where $a_b, q_b$ and $a_D, q_D$ are the scalar parameters of the background body $\Omega$ and the inclusion $D$, respectively, $\chi_D$ is the characteristic function of $D$ and $A(x)$ is a matrix-valued function. We denote by $\mathcal{C}^{\Sigma}_D$ the set of all the possible Cauchy data $(u\vert_{\Sigma}, \sigma\nabla u\cdot\nu\vert_{\Sigma})$ associated to the problem, where $\nu$ is the outer unit normal of $\Omega$ at $\Sigma$.

\noindent The inverse problem consists in the determination of $D$ given $\mathcal{C}^{\Sigma}_D$.

\noindent For $q=0$, namely when one deals with the conductivity equation, the direct problem is well-posed and one can define the so-called Dirichlet-to-Neumann map 
\begin{align}
\Lambda_D:H^{\frac{1}{2}}_{00}(\Sigma) &\rightarrow H^{-\frac{1}{2}}_{00}(\Sigma)\notag\\
f &\rightarrow \sigma \nabla u\cdot \nu\vert_{\Sigma},
\end{align}
which, roughly speaking, assigns to each electric potential prescribed on $\Sigma$ the corresponding current density. 
When no sign nor spectrum condition on $q$ are assumed, the existence of such operator is not guaranteed.

\noindent Notice that the inverse problem we are discussing encompasses the classical inverse conducting problem for a special class of anisotropic conductivities $\sigma$ of the form \eqref{conductivity} as well as the reduced wave equation $\triangle u + k^2 c^{-2} u=0$.

\noindent The prototype of this class of inverse problems is the determination of an inclusion in an isotropic electrostatic conductor by means of full boundary measurements of the electric potential and the current flux. The uniqueness issue for such an inverse problem was solved by Isakov in  \cite{v.isakov1988} by combining the Runge approximation theorem with the use of solutions with Green's function type singularities. The stability counterpart of Isakov's result has been tackled by Alessandrini and Di Cristo in \cite{g.alessandrinim.dicristo2005} by providing a logarithmic stability estimate. Their argument is still based on singular solution method, whereas Runge approximation argument has been replaced by the quantitative unique continuation estimates, since they seem to be more suitable for stability purposes. This strategy has inspired a line of research in which some methods and results have been extended to more complicated equations and systems (see for instance \cite{m.dicristo2009}, \cite{m.dicristoy.ren2017}, \cite{g.alessandrinim.dicristoa.morassie.rosset2014}, \cite{a.morassie.rosset2016}). More recently, in the spirit of \cite{g.alessandrinis.vessella2005}, where it has been observed that an improvement of the stability rate is possible for finite dimensional unknown conductivities, Lipschitz stability estimates for poligonal or polyhedral inclusions have been provided mainly in the context of the conductivity equation and the Helmholtz equation (see \cite{e.berettae.francini2020}, \cite{e.berettae.francinis.vessella2021}, \cite{e.berettam.v.dehoope.francinis.vessella2015}, \cite{a.asprie.berettae.francinis.vessella2022}). Another class of problems which is a particular instance of our problem is the optical tomography, which is mostly studied in medical imaging to infer the properties of a tissue (see for instance \cite{s.rarridgew1999}).
\bigskip

\noindent The purpose of our work is to prove the continuous dependence of $D$ in the Hausdorff metric from the local Cauchy data via a modulus of continuity of logarithmic type. Altough our strategy has been stimulated by the one introduced in \cite{g.alessandrinim.dicristo2005}, the more general context we are dealing with requires the development of new arguments and tools.

\noindent Let us summarize the main steps of our proof with the related issues. 

 \begin{enumerate}[i)]
  \item We introduce the space of local Cauchy data $\mathcal{C}_D^{\Sigma}$ measured on the accessible portion $\Sigma$ and their metric structure as a subspace of a Hilbert space. As in \cite{g.alessandrinim.v.dehoopr.gaburroe.sincich2018}, we express the error on the boundary data in terms of the so-called distance (or aperture) between spaces of Cauchy data.  We also wish to recall that when the local Dirichlet to Neumann map $\Lambda^{\Sigma}_D$ exists, for instance when $q=0$, the local Cauchy data are the graph of $\Lambda^{\Sigma}_D$ and the distance between two sets of Cauchy data is equivalent to the norm of the difference of the corresponding local Dirichlet to Neumann maps. 
 \noindent  For two inclusions $D_1$ and $D_2$, we consider the corresponding local Cauchy data sets $\mathcal{C}_{D_1}^{\Sigma},\mathcal{C}_{D_2}^{\Sigma}$ collected on $\Sigma$, the distance $d(\mathcal{C}_{D_1}^{\Sigma},\mathcal{C}_{D_2}^{\Sigma})$ and, by using the Alessandrini identity argument, we have established the following inequality 
 
 \begin{eqnarray}\label{1}
\left| \int_{\Omega} (\sigma_2-\sigma_1)\nabla u_1\cdot \nabla u_2 + \int_{\Omega} (q_1-q_2) u_1 u_2 \right|\,\leq d(\mathcal{C}_{D_1}^{\Sigma},\mathcal{C}_{D_2}^{\Sigma})\,\, \|(u_1,\sigma_1\nabla u_1\cdot\nu)\|_{\mathcal{H}}\,\, \|(\bar{u}_2,\sigma_2\nabla \bar{u}_2\cdot \nu)\|_{\mathcal{H}},
 \end{eqnarray}
(see Subsection 2.3). Here, $u_1$ and $u_2$ are solution to \eqref{main} when $D=D_1=D_2$ respectively. 
\\

 \item Since the equation at hand might be in the eigenvalue regime, we need to construct and to estimate Green's function with mixed type boundary conditions, namely Dirichlet type on a portion of the boundary and complex-valued Robin type in the remaining one. Such a boundary value problem with local complex Robin condition is well-posed as proved by Bamberger and Duong in \cite{a.bambergert.ha.duong1987}.  We adapt the argument introduced in \cite{g.alessandrinim.v.dehoopr.gaburroe.sincich2018} to our equation, in which the principal part has a matrix-valued leading coefficient that might have a discontinuity across the boundary of the inclusion $D$. We overcome the leading term discontinuity issue by using a quite recent result of propagation of smallness due to Carstea and Wang \cite{c.i.carsteaj.n.wang2020} for a scalar second order elliptic equation in divergence form whose leading coefficients are Lipschitz continuous on two sides of a $C^2$ hypersurface that crosses the domain, but may have jumps across this hypersurface.
 We consider the above inequality for singular solutions $u_1(\cdot)= G_1(\cdot,y)$ and $u_2(\cdot)=G_2(\cdot,w)$ defined on a larger domain. 
  \noindent Focusing on the right hand side of $\eqref{1}$, we introduce the function
\begin{eqnarray}
f(y,w) = S_{D_1}(y,w)-S_{D_2}(y,w),
\end{eqnarray}
which is a solution of our underlying equation in the connected component $\mathcal{G}$ of $\mathbb{R}^n \setminus (\overline{D_1\cup D_2})$ which contains $\mathbb{R}^n\setminus \overline{\Omega}$. Moreover, in the case in which $y,z$ are placed outside $\Omega$, $f$ is controlled in terms of $d(\mathcal{C}_{D_1}^{\Sigma},\mathcal{C}_{D_2}^{\Sigma})$. We propagate the smallness of $f$ as $y,w$ move inside $\Omega$ within $\mathcal{G}$. We wish to underline a delicate point of the proof which is the fact that we can only perform unique continuation estimates near point in a subset, say $V$, of the boundary of $D_1\cup D_2$, that can be reached from $\mathcal{G}$ in a quantitative form. This involves the use of chain of balls whose numbers are suitably bounded and whose radii must be bounded from below (see \cite{g.alessandrinie.sincich2013}, \cite{g.alessandrinim.dicristoa.morassie.rosset2014} and also  \cite{l.rondie.sincichm.sini2021} for a related argument).  In this respect, a crucial step is that under the a priori regularity assumptions on $D_1,D_2$, we can prove that there exists a point $P\in\partial D_1\cap V$ such that the Hausdorff distance between $\partial D_1$ and $\partial D_2$ is dominated by the distance $dist(P, D_2)$ .

\item We show that when $y=w$ tends to a point $P$ of $\partial D_1\setminus \overline{D_2}$, $f(y,y)$ blows up. The combination of such a blow-up and the control of $f(y,y)$ in terms of $d(\mathcal{C}_{D_1}^{\Sigma},\mathcal{C}_{D_2}^{\Sigma})$ discussed above leads to the logarithmic estimates 
\begin{eqnarray}
d_H(\partial D_1, \partial D_2)\le C \left| \log (d(\mathcal{C}_{D_1}^{\Sigma},\mathcal{C}_{D_2}^{\Sigma}))\right|^{-\eta} \ .
\end{eqnarray}
 The new features of anisotropic and inhomogeneous leading coefficient, as well as the additional zero-order term, require a careful analysis of the asymptotic behaviour of the singular solutions $G_i(\cdot, y)$ when the pole $y$ approaches to the inclusion $D_i, \ i=1,2$ and serves as a tool to achieve the blow up estimate of $f(y,y)$.

 \end{enumerate}

\noindent We believe that the present study might be a theoretical building block for future Lipschitz stability result and corresponding numerical reconstruction procedure under the a priori assumption of poygonal or polyhedral inclusion. For this reason, we also choose to provide the following stability estimate
 \begin{eqnarray}\label{misfitmain}
 d_H(\partial D_1, \partial D_2)\le C  \left| \log  \mathcal{J}(D_1,D_2)\right|^{-\eta},
 \end{eqnarray}
in the present context of a general inclusion with $C^2$ boundary and in terms of a misfit functional 
 \begin{multline}\label{misfitma}
 \mathcal {J}(D_1,D_2) = \int_{D_y\times D_z} \Big|\int_{\Sigma} \Big[ \sigma_{1}(x)\nabla G_1(x,y)\cdot\nu(x)\: G_2(x,z) - \\
- \sigma_{2}(x)\nabla G_2(x,z)\cdot\nu(x)\: G_1(x,y)\Big]\:dS(x)\Big|^2dy\, dz,
 \end{multline}
where the $D_y, D_z$ are suitably chosen sets compactly contained in $\R^n\setminus \bar{\Omega}$ (see Section \ref{5} for a more precise definition). We expect that for polygonal or polyhedra inclusions, the logarithmic rate in \eqref{misfitmain} might be improved up to a H\"older type stability. As shown in \cite{g.alessandrinim.v.dehoopf.faucherr.gaburroe.sincich2019} (see also \cite{s.foschiattie.sincichr.gaburro2021}), the above mentioned H\"older estimate may be suitable to the numerical reconstructions and to the use of the singular solutions method. The use of a misfit functional of such kind suggests that the knowledge of the full Dirichlet to Neumann map or the full local Cauchy data set are not necessary, and that it suffices to sample them on Green’s type
functions with sources placed outside the physical domain.
 
\noindent The paper is organised as follows. In Section \ref{2} we introduce the a priori assumptions, we define the local Cauchy data and state the main theorem. In Section \ref{3} we introduce the geometric lemmas and we prove the main theorem.  In Section \ref{4} we introduce and prove technical propositions. In particular, we construct the Green function (Lemma \ref{lemma2}) and we prove the upper bound for $f$ (Proposition \ref{prop-bis}) and the lower bound for $f$ (Proposition \ref{prop5}). In Section \ref{5} we derive a stability result in terms of the misfit functional \eqref{misfitma}.


\section{Main Result}\label{2}
		
\setcounter{equation}{0}

\subsection{Notation and definitions}
	
\noindent Denote a point $x\in \mathbb{R}^n$ by $x=(x',x_n)$, where $x'\in\mathbb{R}^{n-1}$ and $x_n\in\mathbb{R}$, $n\geq 3$. Denote with $B_r(x)\subset \mathbb{R}^{n}$, $B_r'(x')\subset \mathbb{R}^{n-1}$ the open balls centred at $x$, $x'$ respectively with radius $r$, with $Q_r(x)$ the cylinder
\[Q_r(x)=B_r'(x')\times(x_n-r,x_n+r).\]
	
\noindent Set $B_r=B_r(O)$, $Q_r=Q_r(O)$, the positive real half space $\R^n_+ = \{ (x',x_n)\in \R^n\,:\, x_n>0 \}$, the negative real half space $\R^n_- = \{ (x',x_n)\in \R^n\,:\, x_n<0 \}$, the positive semisphere centred at the origin $B^+_r = B_r\cap \R^n_+$, the negative semisphere centred at the origin $B^-_r = B_r\cap \R^n_-$.
	
\begin{definition}[$C^{2}$ regularity]\label{firstdefin}
Let $\Omega\subset \R^n$ be a bounded domain. We say that a portion $\Sigma$ of $\partial\Omega$ is of $C^{2}$ class with constants $r_0,L>0$, if, for any point $P\in\Sigma$, there exists a rigid transformation of coordinates under which $P=O$ and
\begin{align*}
\Omega\cap Q_{r_0}= \left\{ x\in Q_{r_0}\,:\,x_n>\varphi(x') \right\},
\end{align*}
		
\noindent where $\varphi\in C^{2}(B'_{r_0})$ is such that 
\begin{align*}
\varphi(O)=|\nabla\varphi(O)|=0 \hspace{1.5cm}\text{and}\hspace{1.5cm}\|\varphi\|_{C^{2}(B'_{r_0})}\leq Lr_0.
\end{align*}
\end{definition}

\begin{remark}
The $C^{2}$-norm in Definition \ref{firstdefin} is normalized so that its terms are dimensionally homogeneous, i.e.
\[
\|\varphi\|_{C^{2}(B'_{r_0})} = \sum_{i=0}^2 r_0^i \|\nabla^i \varphi\|_{L^{\infty}(B'_{r_0})}, 
\]
\end{remark}

\begin{definition}
Let be $\Omega$ a domain of $\R^n$. We say that a portion $\Sigma$ of the boundary of $\Omega$ is a flat portion of size $r_0$ if there exist a point $P\in \Sigma$ and a rigid transformation of coordinates under which $P=O$ and 
\begin{align*}
\Sigma\cap Q_{\frac{r_0}{3}} &= \left\{ x\in Q_{\frac{r_0}{3}}\,:\, x_n=0\right\},\\
\Omega\cap Q_{\frac{r_0}{3}} &= \left\{ x\in Q_{\frac{r_0}{3}}\,:\, x_n>0\right\},\\
(\R^n\cap\Omega)\cap Q_{\frac{r_0}{3}} &= \left\{ x\in Q_{\frac{r_0}{3}}\,:\, x_n<0\right\}.
\end{align*}
\end{definition}

\begin{definition}
The \textit{Hausdorff distance} between two bounded closed subsets $D_1$ and $D_2$ in $\R^n$ is defined as
\[
d_H( D_1, D_2) := \max \Big\{ \sup_{w\in D_2} dist(w, D_1), \sup_{w\in D_1} dist(w, D_2)\Big\}.
\]
\end{definition}

\subsection{A priori information}\label{apriorinfo}
In this section, we introduce the a priori information on the domain $\Omega$, the inclusion $D$ and the coefficients $\sigma$ and $q$.

\begin{enumerate}[i)]
\item \textit{Domain.}
The set $\Omega$ is a bounded domain in $\R^n$ such that
\begin{equation}\label{omegabegin}
\p\Omega \text{ is of Lipschitz class with constants }r_0, L,
\end{equation}
\begin{equation}\label{omegaend}
|\Omega|\leq Nr_0^n,
\end{equation}
where $r_0, L, N$ are given positive constants, $n\geq 3$.
\item \textit{Open portion of the boundary}.
\begin{equation}
\Sigma\subset \p\Omega \text{ is a non-empty, flat open portion,}
\end{equation}
\item \textit{Inclusion.} Let $D$ be a connected subset of $\Omega$ such that 
\begin{equation}\label{inclusionbegin}
D\subset\subset\Omega\quad \text{ and }\quad \text{dist}(\p D,\p\Omega)\geq\delta_0>0,
\end{equation}
\begin{equation}
\p D \text{ is of }C^{2}\text{ class with constants }r_0, L, 
\end{equation}
\begin{equation}\label{inclusionend}
\Omega\setminus \bar{D} \text{ is connected.}
\end{equation}

\item \textit{Parameters.} 
The coefficient $\sigma\in L^{\infty}(\Omega,Sym_n)$ has the following structure
\begin{equation}\label{condbegin}
\sigma(x) = \left(a_b(x) + (a_D(x) - a_b(x))\chi_D(x)\right)A(x),
\end{equation}
where the scalar functions $a_b, a_D$ are in $C^{0,1}(\bar{\Omega})$. Moreover, there exist $\bar{\gamma}>1$, $\eta_0>0$ such that
\begin{equation}
\bar{\gamma}^{-1}\leq a_b(x), a_D(x) \leq \bar{\gamma}, \quad \mbox{for }x\in \Omega,
\end{equation}
\begin{equation}\label{lineareta}
(a_D(x)-a_b(x))^2\geq \eta_0^2>0, \quad \mbox{for }x\in \Omega.
\end{equation}
The real $n\times n$ matrix-valued function $A(x)$ is a symmetric Lipschitz continuous function such that there exists $\bar{A}>0$ for which
\begin{equation}
\|A\|_{C^{0,1}(\Omega)}\leq \bar{A}.
\end{equation}
The matrix-valued function $\sigma$ satisfies the uniform ellipticity condition, i.e. there exists a constant $\bar{\lambda}>1$ such that
\begin{equation}\label{condend}
\bar{\lambda}^{-1} |\xi|^2 \leq \sigma(x)\xi\cdot\xi \leq \bar{\lambda} |\xi|^2, \quad \mbox{for a.e. }x\in \Omega, \mbox{ for all }\xi\in \R^n.
\end{equation}
The scalar function $q$ has the form
\begin{equation}\label{zerorderbegin}
q(x) = q_b(x)+(q_D(x)-q_b(x))\chi_D(x), 
\end{equation}
where the functions $q_b, q_D$ are in $L^{\infty}(\bar{\Omega})$. Moreover, for $\bar{\gamma}>0$ and $\eta_0>0$, it holds
\begin{equation}\label{zerorderend}
\|q\|_{L^{\infty}(\Omega)}\leq \bar{\gamma}.
\end{equation}
\end{enumerate}
The set $\{n, N, r_0, L, \bar{A}, \bar{\gamma}, \bar{\lambda}, \delta_0, \eta_0\}$ is called the \textit{a priori data.}


\subsection{Local Cauchy data and the main result}

Consider a domain $\Omega$, an inclusion $D$ satisfying \eqref{omegabegin}-\eqref{omegaend} and \eqref{inclusionbegin}-\eqref{inclusionend} respectively. Let $\Sigma$ be the accessible portion of $\p\Omega$ where the boundary measurements are taken. A Cauchy data set is a collection of boundary data measurements associated with an inclusion. Before giving the formal definition of the notion of local Cauchy data, we introduce suitable trace spaces. Let
\[
H^{\frac{1}{2}}_{co}(\Sigma) = \left\{f\in H^{\frac{1}{2}}(\p\Omega)\,:\,\text{supp}(f)\subset \Sigma\right\}.
\]
be the trace space that contains all the trace functions which are compactly supported in $\Sigma$. Denote with $H^{\frac{1}{2}}_{00}(\Sigma)$ its closure under the norm $\|\cdot\|_{H^{\frac{1}{2}}(\p\Omega)}$. Similarly, let
\[
H^{\frac{1}{2}}_{co}(\p\Omega\setminus\bar{\Sigma}) = \left\{f\in H^{\frac{1}{2}}(\p\Omega)\,:\,\text{supp}(f)\subset \p\Omega\setminus\bar{\Sigma}\,\right\}
\]
Denote with $H^{\frac{1}{2}}_{00}(\p\Omega\setminus\bar{\Sigma})$ its closure under the norm $\|\cdot\|_{H^{\frac{1}{2}}(\p\Omega)}$. Let $H^{-\frac{1}{2}}(\p\Omega)$ be the dual space of $H^{\frac{1}{2}}_{co}(\p\Omega)$.

\begin{definition}
The Cauchy data set on $\Sigma$ associated with the inclusion $D$ is defined as the set
\begin{align}
\mathcal{C}_D^{\Sigma}(\Omega) = \Big\{ (f,g)\in H^{\frac{1}{2}}_{00}(\Sigma)\times H^{-\frac{1}{2}}(\p \Omega)\,:\,&\exists u\in H^1(\Omega) \text{ weak solution to }\notag\\
&\text{div}(\sigma\nabla u) + qu = 0 \qquad \text{in }\Omega,\notag\\
&u\vert_{\p\Omega} = f, \qquad \sigma\nabla u\cdot \nu\vert_{\p\Omega}=g \Big\}
\end{align}
\end{definition}

\noindent Let 
\[
H^{-\frac{1}{2}}_{00}(\p\Omega\setminus\bar{\Sigma})=\left\{ \psi\in H^{-\frac{1}{2}}(\p\Omega)\,:\, \langle \psi,\varphi\rangle =0, \quad \forall \varphi\in H^{\frac{1}{2}}_{00}(\Sigma)\,\right\}
\]
where $\langle \psi,\varphi\rangle$ represents the duality between the spaces $H^{-\frac{1}{2}}(\p\Omega)$ and $H^{\frac{1}{2}}(\p\Omega)$ based on the inner product on $L^2(\p\Omega)$
\[
\langle \psi,\varphi\rangle = \int_{\p\Omega} \psi(x)\cdot\overline{\varphi(x)}\diff x.
\]

\noindent Define $H^{\frac{1}{2}}(\p\Omega)\vert_{\Sigma}$ and $H^{-\frac{1}{2}}(\p\Omega)\vert_{\Sigma}$ as the restrictions to $\Sigma$ of the trace spaces $H^{\frac{1}{2}}(\p\Omega)$ and $H^{-\frac{1}{2}}(\p\Omega)$ respectively. These trace spaces can be equivalently defined as quotient spaces via the following relation:
\[
\varphi \sim \psi \qquad \iff \qquad \varphi-\psi \in H^{\frac{1}{2}}_{00}(\p\Omega\setminus\bar{\Sigma})
\]
so that
\[
H^{\frac{1}{2}}(\p\Omega)\vert_{\Sigma} = H^{\frac{1}{2}}(\p\Omega) \slash \sim\,\, = H^{\frac{1}{2}}(\p\Omega) \slash H^{\frac{1}{2}}_{00}(\p\Omega\setminus\bar{\Sigma}).
\]
Similarly,
\[
H^{-\frac{1}{2}}(\p\Omega)\vert_{\Sigma} = H^{-\frac{1}{2}}(\p\Omega)\slash H^{-\frac{1}{2}}_{00}(\p\Omega\setminus\bar{\Sigma}).
\]

\begin{definition}
The local Cauchy data on $\Sigma$  associated with the inclusion $D$ whose first component vanishes on $\p\Omega\setminus\Sigma$ is defined as
\begin{align}
\mathcal{C}_D^{\Sigma}(\Sigma)= \Big\{ (f,g)\in H^{\frac{1}{2}}_{00}(\Sigma)\times H^{-\frac{1}{2}}(\p\Omega)\vert_{\Sigma}\,:\,&\exists u\in H^1(\Omega) \text{ weak solution to }\notag\\
&\mbox{div}(\sigma\nabla u) + qu = 0 \qquad \text{in }\Omega,\notag\\
&u = f \qquad \text{on }\p\Omega,\notag\\
&\langle\sigma\nabla u\cdot \nu\vert_{\p\Omega},\varphi\rangle = \langle g, \varphi \rangle, \qquad \forall \varphi \in H^{\frac{1}{2}}_{00}(\Sigma) \Big\}.
\end{align}
\end{definition}
\noindent Notice that $\mathcal{C}^{\Sigma}_D(\Sigma)$ is a subspace of the product space $H^{\frac{1}{2}}_{00}(\Sigma)\times H^{-\frac{1}{2}}(\p\Omega)\vert_{\Sigma}$.

\noindent Set $\mathcal{H}:=H^{\frac{1}{2}}_{00}(\Sigma)\times H^{-\frac{1}{2}}(\p\Omega)\vert_{\Sigma}$. 
\noindent  Notice that $\mathcal{H}$ is a Hilbert space with norm
\[
\|(f,g)\|_{\mathcal{H}}= \Big( \|f\|^2_{H^{\frac{1}{2}}_{00}(\Sigma)} + \|g\|^2_{H^{-\frac{1}{2}}(\p\Omega)\vert_{\Sigma}}\Big)^{\frac{1}{2}},\qquad (f,g)\in\mathcal{H}.
\]
\vspace{0.5cm}

\noindent Let $D_1,D_2$ be two inclusions satisfying \eqref{inclusionbegin}-\eqref{inclusionend}. In order to simplify the notation, set $\mathcal{C}_i=\mathcal{C}^{\Sigma}_{D_i}(\Sigma)$ for $i=1,2$ which denote the local Cauchy data associated with the corresponding inclusions. 

\noindent As in \cite{g.alessandrinim.v.dehoopf.faucherr.gaburroe.sincich2019}, we find convenient to introduce the notion of \textit{distance} (or \textit{aperture}) between closed subspaces $\F$ and $\G$ of a Hilbert space $\mathcal{H}$ as the quantity
\[
d(\F,\G)=\max\Big\{ \sup_{h\in\G, h\neq 0} \inf_{k\in \F} \frac{\|h-k\|}{\|h\|}, \sup_{k\in\F, k\neq 0} \inf_{h\in \G} \frac{\|h-k\|}{\|k\|} \Big\}.
\]

\noindent As a reference for the related theory and applications see \cite{t.kato1980}, \cite{a.knyazeva.jujunashvilim.argentati2010}. If $d(\F,\G)<1$, it is known that the two quantity in the maximum coincides (see \cite[Corollary 2.13]{a.knyazeva.jujunashvilim.argentati2010}), therefore we can assume that 
\begin{equation}\label{easydistance}
d(\F,\G) = \sup_{h\in\G, h\neq 0} \inf_{k\in \F} \frac{\|h-k\|}{\|h\|}.
\end{equation}
Choose $\F=\mathcal{C}_1$ and $\G=\mathcal{C}_2$, since we consider the case in which $d(\mathcal{C}_1,\mathcal{C}_2)<1$, then by \eqref{easydistance} the distance between two local Cauchy data has the form
\begin{equation}\label{cauchy}
d(\mathcal{C}_1,\mathcal{C}_2)= \sup_{(f_1,g_1)\in \mathcal{C}_1\setminus\{(0,0)\}} \inf_{(f_2,g_2)\in \mathcal{C}_2} \frac{\|(f_1,g_1)-(f_2,g_2)\|_{\mathcal{H}}}{\|(f_1,g_1)\|_{\mathcal{H}}}.
\end{equation}
Let $\omega:[0,+\infty)\rightarrow[0,+\infty)$ be a non decreasing, positive function such that
\begin{equation}\label{omega}
\omega(t)\leq |\ln t|^{-\eta}\hspace{0.5cm} \text{ for } t\in (0,1),\hspace{1.5cm} \omega(t)\rightarrow 0\hspace{0.5cm} \text{as}\hspace{0.5cm}t\rightarrow 0^+,
\end{equation}
where $\eta$ is a positive constant.

\begin{theorem}\label{theorem}
Let $\Omega\subset\R^n$, $n\geq 3$ be a bounded domain satisfying \eqref{omegabegin}-\eqref{omegaend} and let $D_1$, $D_2$ be two inclusions of $\Omega$ satisfying \eqref{inclusionbegin}-\eqref{inclusionend}. Let $\sigma_{1}$, $\sigma_{2}$ be the anisotropic conductivities satisfying \eqref{condbegin}-\eqref{condend} and let $q_1$, $q_2$ be the coefficients of the zero order term satisfying \eqref{zerorderbegin}-\eqref{zerorderend}.  Let $\mathcal{C}_1$, $\mathcal{C}_2$ be the local Cauchy data corresponding to the inclusions $D_1$, $D_2$, respectively. For $\epsilon\in(0,1)$, if $d(\mathcal{C}_1,\mathcal{C}_2)<\epsilon$, then 
\begin{equation}\label{stability}
d_H(\p D_1,\p D_2)\leq C \omega(\epsilon),
\end{equation}
where $C$ is a positive constant depending on the a priori data only and $\omega$ is defined in \eqref{omega}.
\end{theorem}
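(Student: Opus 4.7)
The plan is to follow the three-step strategy outlined in the Introduction. I would first apply the Alessandrini-type identity \eqref{1} to the singular solutions $u_1(\cdot)=G_1(\cdot,y)$, $u_2(\cdot)=G_2(\cdot,w)$ constructed in Lemma \ref{lemma2}. Subtracting from $G_1(x,y)G_2(x,w)$ the common leading singular part coming from a local Levi-type parametrix yields the quantity $f(y,w)=S_{D_1}(y,w)-S_{D_2}(y,w)$ which, viewed on the unbounded connected component $\G$ of $\R^n\setminus\overline{D_1\cup D_2}$, solves the homogeneous equation in each variable separately. For poles $y,w\in\R^n\setminus\overline{\Omega}$ lying in a suitable region facing $\Sigma$, the right-hand side of \eqref{1} furnishes the upper bound
\[
|f(y,w)| \le C\,\epsilon\,\|(G_1,\sigma_1\nabla G_1\cdot\nu)\|_{\mathcal{H}}\,\|(\bar G_2,\sigma_2\nabla\bar G_2\cdot\nu)\|_{\mathcal{H}},
\]
the last two factors being uniformly bounded in terms of the a priori data; this is essentially the content of Proposition \ref{prop-bis}.

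The second step is to propagate this smallness from exterior poles to interior ones, namely to points $y\in\G$ approaching a quantitatively reachable subset $V$ of $\partial(D_1\cup D_2)$. I would deploy a chain-of-balls argument based on three-spheres (doubling) inequalities, carefully selecting chains whose length is bounded above and whose radii are bounded below by constants depending only on the a priori data, in the spirit of \cite{g.alessandrinie.sincich2013} and \cite{g.alessandrinim.dicristoa.morassie.rosset2014}. Because $\sigma$ jumps across $\partial D_i$ while remaining Lipschitz on either side of the $C^2$ interface, the propagation through such interfaces is effected by the recent Carstea--Wang result \cite{c.i.carsteaj.n.wang2020}. The outcome is an estimate $|f(y,y)|\le C\,\omega(\epsilon)$ for $y\in\G$ ranging in a suitable tubular neighborhood of points of $V$.

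Next, I would combine this upper bound with the blow-up of $f(y,y)$ as $y\to P\in\partial D_1\setminus\overline{D_2}$ from inside $\G$ (Proposition \ref{prop5}). The blow-up rate is computed by tracking the asymptotics of $G_i(\cdot,y)$, whose leading singularity is the fundamental solution of $-\operatorname{div}(A(P)\nabla\cdot)$ frozen at $P$, with lower-order corrections absorbed using that $q_i\in L^\infty$ and $A$ is Lipschitz; the coefficient of the leading blow-up term involves the jump $a_D(P)-a_b(P)$, which is uniformly bounded below by $\eta_0$ thanks to \eqref{lineareta}. A geometric lemma proved in Section \ref{3} guarantees that, possibly after swapping the roles of $D_1$ and $D_2$, there exists such a $P\in\partial D_1\cap V$ realizing $d_H(\partial D_1,\partial D_2)\le C\,\mathrm{dist}(P,D_2)$, exploiting the $C^2$-regularity and the connectedness hypotheses on $\Omega\setminus\overline{D_i}$.

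Matching the upper and lower bounds along a sequence of poles $y\to P$ at an optimal rate then yields the desired estimate \eqref{stability}. The main obstacle is the quantitative smallness propagation across the interfaces $\partial D_i$: one must combine interior three-spheres estimates in $\G$ with the two-sided Carstea--Wang bounds, while ensuring that the chains of balls used to reach $V$ have radii bounded below uniformly in terms of the a priori data, otherwise the logarithmic exponent in \eqref{omega} would degenerate. A secondary technical issue is the construction of $G_i$ when $q_i$ lies in the eigenvalue regime for \eqref{main}, which is addressed via the well-posedness of the mixed Dirichlet/complex-Robin problem of \cite{a.bambergert.ha.duong1987}.
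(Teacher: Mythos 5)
Your proposal follows essentially the same route as the paper: the Alessandrini-type identity applied to Green's functions with mixed Dirichlet/complex-Robin boundary conditions, propagation of smallness via chains of three-spheres inequalities (with the Carstea--Wang result handling the jump across the interface), the geometric lemmas producing a reachable point $P\in\partial D_1$ whose distance to $D_2$ dominates the modified (hence the Hausdorff) distance, and the blow-up of $f(y,y)$ obtained from frozen-coefficient asymptotics of the singular solutions. The only imprecision is that the propagated upper bound is not uniformly $C\,\omega(\epsilon)$ near $\partial D_1$ but rather of the form $c_1\,\epsilon^{Bh^F}/h^A$, degenerating as $h\to 0$; the logarithmic modulus emerges only after balancing this against the lower bound $c_2h^{2-n}-c_3h^{2-2n}$ via the choice $h\sim|\ln\epsilon|^{-1/(2F)}$, exactly as your final matching step indicates.
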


\section{Proof of Theorem \ref{theorem}}\label{3}
\setcounter{equation}{0}
To begin with, in the spirit of \cite{g.alessandrinim.dicristo2005}, we introduce the socalled \textit{modified distance}. 
 After that, we introduce the singular solutions and state Proposition \ref{prop-bis} and Proposition \ref{prop5} which allow us to upper-bound the function $f$ defined in \eqref{f} in terms of the distance between two Cauchy data sets and to lower bound $f$ in terms of the geometric quantities related with our problem. In the last part, we prove Theorem \ref{theorem}.

\subsection{Metric Lemmas}\label{preliminaryprop}
\noindent Let $\Omega\subset\R^n$ be a bounded domain satisfying \eqref{omegabegin}-\eqref{omegaend} and let $D_1$, $D_2$ be two inclusions contained in $\Omega$ satisfying \eqref{inclusionbegin}-\eqref{inclusionend}. Let $\sigma_{1}$ and $\sigma_{2}$ be the anisotropic conductivities satisfying \eqref{condbegin}-\eqref{condend}, and $q_1$, $q_2$ satisfying \eqref{zerorderbegin}-\eqref{zerorderend}

\noindent Denote with $\G$ the connected component of $\Omega\setminus (D_1\cup D_2)$ whose boundary contains $\p \Omega$. Set $\di=\Omega\setminus \G$. 

\begin{definition}
The modified distance between two closed subsets $D_1$ and $D_2$ of $\R^n$ is the number
\begin{equation}
d_{\mu}(D_1,D_2) = \max \Big\{ \sup_{x\in \p D_1\cap\p\Omega_D} \mbox{dist}(x,D_2), \sup_{x\in \p D_2\cap\p\Omega_D} \mbox{dist}(x,D_1)\Big\}.
\end{equation}
\end{definition}
\noindent The introduction of another map that quantifies the distance between two inclusions $D_1, D_2$ is justified by the fact that the point in which the Hausdorff distance is attained may lay on $\p D_1\cup \p D_2$ but not on $\p \Omega_D$. This is an obstruction in view of the application of the propagation of smallness argument, since in order to reach that point from $\p\Omega$ we have to cross $\p D_1\cup \p D_2$.

\noindent In general, the modified distance is not a distance and it does not bound the Hausdorff distance from above (see \cite{g.alessandrinie.berettae.rossets.vessella2000} for a counterexample). Under our assumptions on the inclusions, the following lemma guarantees that in our case $d_{\mu}$ dominates $d_{\mathcal{H}}$ (for the proof see \cite[Proposition 3.3]{g.alessandrinim.dicristo2005}).
\begin{lemma}\label{lemma1}
Let $\Omega$, $D_1$, $D_2$ be, respectively, a bounded domain satisfying \eqref{omegabegin}-\eqref{omegaend} and two inclusions satisfying \eqref{inclusionbegin}-\eqref{inclusionend}. Then there is a positive constant $c$ which depends only on the a priori data such that
\begin{equation}
d_{\mathcal{H}}(\p D_1, \p D_2)\leq c d_{\mu}(D_1, D_2).
\end{equation}
\end{lemma}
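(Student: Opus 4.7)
The plan is to proceed by a case distinction on where the Hausdorff distance is attained. Up to interchanging $D_1$ and $D_2$, I may assume there exists $P\in\partial D_1$ with $\mbox{dist}(P,\partial D_2) = d_H(\partial D_1,\partial D_2) =: h$; I may assume $h>0$, the case $h=0$ being trivial.

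First, suppose the favourable situation $P\in\partial\mathcal{G}$. Since $P\in\partial D_1\cap\partial\mathcal{G}$, the point $P$ is the limit of a sequence of points of $\mathcal{G}\subset \Omega\setminus(D_1\cup D_2)$, hence $P\notin D_2$. Combined with $\mbox{dist}(P,\partial D_2)=h>0$, this yields $P\notin\overline{D_2}$, and therefore the nearest point of $\overline{D_2}$ to $P$ lies on $\partial D_2$, giving $\mbox{dist}(P,D_2)=\mbox{dist}(P,\partial D_2)=h$. Consequently $d_\mu(D_1,D_2)\geq h$, and the desired inequality holds with constant $1$.

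The genuinely nontrivial case is $P\in\partial D_1\setminus\partial\mathcal{G}$, which means that $P$ lies in the interior of $\Omega_D$. In this case, on the exterior side of $\partial D_1$ at $P$ (with respect to the outward normal, well defined since $\partial D_1\in C^{2}$), the points near $P$ lie outside $D_1$ but inside $\Omega_D$; thus they belong either to $D_2$ or to a bounded component of $\Omega\setminus(D_1\cup D_2)$ distinct from $\mathcal{G}$. My strategy would be to slide along $\partial D_1$ starting from $P$ towards the closest point of $\partial D_1\cap\partial\mathcal{G}$, monitoring meanwhile the distance to $D_2$. The uniform $C^{2}$ regularity produces interior and exterior touching balls of radius $\rho_0$ depending only on $r_0$ and $L$. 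Using these balls I would estimate on the one hand the arc length needed along $\partial D_1$ to exit the trapped region and reach a point $P'\in\partial D_1\cap\partial\mathcal{G}$, and on the other hand how fast $\mbox{dist}(\,\cdot\,,D_2)$ can decay along this path. Both bounds are purely geometric in terms of $r_0, L, h$, and together they produce a constant $c=c(r_0,L)$ such that $\mbox{dist}(P',D_2)\geq h/c$, whence $d_\mu(D_1,D_2)\geq h/c$.

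The main obstacle is exactly the geometric bookkeeping of this second case: bounded components of $\Omega\setminus(D_1\cup D_2)$ trapped between $D_1$ and $D_2$ may produce delicate topological configurations, and a quantitative reach-from-$\mathcal{G}$ argument is required rather than a soft compactness one. It is precisely here that the $C^{2}$ (in particular $C^{1,1}$) regularity of the inclusions is used in an essential way, via the uniform interior/exterior touching ball property; mere Lipschitz regularity of $\partial D_1,\partial D_2$ would not provide quantitative control of the constant $c$ in terms of the a priori data alone.
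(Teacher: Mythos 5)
Your Case 1 is correct, but it is the trivial half of the statement. The whole content of the lemma lives in your Case 2 (this is exactly where the counterexample of Alessandrini--Beretta--Rosset--Vessella, quoted in the paper, sits), and there you offer a declared strategy rather than a proof. Moreover, the strategy fails as described. First, when $P$ lies in the interior of $\Omega_D$ it may perfectly well lie \emph{inside} $D_2$ --- this is the typical bad configuration, where $\partial D_1$ dips into $D_2$ to depth $h$ --- and then $\mathrm{dist}(P,D_2)=0$, so there is nothing to propagate from $P$. Second, even if you replace $\mathrm{dist}(\cdot,D_2)$ by $\mathrm{dist}(\cdot,\partial D_2)$, the only control you have on its variation along a curve is the trivial $1$-Lipschitz bound: after sliding an arc length $\ell$ along $\partial D_1$ you can only guarantee $\mathrm{dist}(P',\partial D_2)\geq h-\ell$, which is vacuous unless $\ell\lesssim h$. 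There is no bound of the form $\ell\leq Ch$ on the distance along $\partial D_1$ from $P$ to the nearest point of $\partial D_1\cap\partial\Omega_D$: the region of $\Omega_D$ separating $P$ from $\mathcal{G}$ can have diameter comparable to $\mathrm{diam}(\Omega)$ while $h$ is arbitrarily small. So the two ``purely geometric bounds'' you invoke cannot be multiplied together to give $\mathrm{dist}(P',D_2)\geq h/c$; a different mechanism is needed.

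For comparison: the paper does not prove the lemma at all, it cites it verbatim from Alessandrini--Di Cristo (2005), Proposition 3.3. The argument there is local at the extremal point $P$ rather than a transport along $\partial D_1$: one uses the uniform interior and exterior tangent balls of radius $\rho_0(r_0,L)$ guaranteed by the $C^{1,1}$ regularity, notes that $B_h(P)$ misses $\partial D_2$, so that a suitably shrunk exterior tangent ball of radius comparable to $\min\{h,\rho_0\}$ lies entirely in $D_2$ or entirely in one connected component of the complement of $D_1\cup D_2$, and then runs a case analysis on these alternatives (the case where that component is $\mathcal{G}$ reduces to your Case 1; the remaining cases are handled by locating a ball of radius $\sim\min\{h,\rho_0\}$ inside $D_2\setminus\overline{D_1}$ or inside a trapped component and exploiting the definition of $d_\mu$ there --- note also that the witness point for $d_\mu$ may have to be taken on $\partial D_2\cap\partial\Omega_D$ rather than on $\partial D_1\cap\partial\Omega_D$, a possibility your sketch does not contemplate). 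To complete your write-up you would need to reproduce that case analysis, or simply cite it as the authors do.
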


\noindent Let us remark now that for simplicity we assume that there exists a point on $\p D_1\cap \p\Omega_D$ which realizes the modified distance. We denote that quantity simply as $d_{\mu}$.
\bigskip

\noindent In order to apply the quantitative estimates for unique continuation, based on the iterated application of the three-spheres inequality, it is important to control the radii of these balls from below and to avoid the case in which points of $\partial\Omega_D$ are not reachable by such a chain of balls. Hence, we find convenient to introduce here ideas first presented in \cite{g.alessandrinie.sincich2013} and then applied by \cite{g.alessandrinim.dicristoa.morassie.rosset2014} in the elasticity case.

\noindent Let $O$ denote the origin in $\R^n$, $\nu$ be a unit vector, $h>0$ and $\theta\in\left(0,\frac{\pi}{2}\right)$ be an angle. We recall that the closed truncated cone with vertex at $O$, axis along the direction $\nu$, height $h$ and aperture $2\theta$ is given by
\begin{equation}
C(O,\nu,h,\theta)=\left\{x\in\R^n:\, |x-(x\cdot\nu)\nu|\leq |x|\sin\theta,\, 0\leq x\cdot \nu\leq h\right\}.
\end{equation}
In our case, for $d, R>0$ such that $R<d$, fixed a point $Q=-de_n$, we consider the closed cone 
\[
C\left(O,-e_n,\frac{d^2-R^2}{d},\arcsin \frac{R}{d}\right)
\]
whose oblique sides are tangent to the sphere $\partial B_R(O)$. 

\noindent Let us pick a point $P\in \partial D_1\cap \partial\Omega_D$ and let $\nu$ be the outer unit normal to $\partial D_1$ at $P$. For a suitable $d>0$, let $[P+d\nu,P]$ be the segment contained in $\R^n\setminus\bar{\Omega}_D$. For $P_0\in \R^n\setminus\bar{\Omega}$, let $\gamma:[0,1]\rightarrow \R^n\setminus\bar{\Omega}_D$ be the path contained in $(\R^n\setminus\bar{\Omega}_D)_d$ such that $\gamma(0)=P_0$ and $\gamma(1)=P+d\nu$. Consider the following neighbourhood of $\gamma\cup [P+d\nu,P]\setminus\{P\}$ formed by a tubular neighbourhood of $\gamma$ attached to a cone with vertex at $P$ and axis along $\nu$,
\begin{equation}\label{tubular}
V(\gamma) = \left[\bigcup_{S\in\gamma} B_R(S)\right]\cup C\left(P,\nu,\frac{d^2-R^2}{d},\arcsin \frac{R}{d}\right).
\end{equation}
Notice that $V(\gamma)$ will depend on the parameters $d, R$, that will be chosen step by step. The following result  guarantees the application of the three-sphere inequality along the tubular neighbourhood contained in $\R^n\setminus\bar{\Omega}_D$ centred at any point of $\partial D_1\cap \partial \Omega_D$ (see \cite[section 11]{g.alessandrinim.dicristoa.morassie.rosset2014} for a proof).

\begin{lemma}\label{metric-lemma}
There exists positive constants $\bar{d}, c_1$ where $\frac{\bar{d}}{r_0}, c_1$ depend on $L$ and there exists a point $P\in \partial D_1$ satisfying 
\begin{equation}\label{uppermodist}
c_1 d_{\mu} \leq dist(P,D_2),
\end{equation}
and such that, for any $P_0\in B_{\frac{r_0}{16}}(P_0)$, where $B_{\frac{r_0}{16}}(P_0)\subset\subset \R^n\setminus \bar{\Omega}_D$, there exists a path $\gamma \subset\R^n\setminus\bar{\Omega}_D$ joining $P_0$ to $P+\bar{d}\nu$ where $\nu$ is the outer unit normal to $D_1$ at $P$ such that, if we choose the coordinate system in which $P$ coincides with the origin and $\nu=-e_n$, then
\begin{equation}
V(\gamma)\subset \R^n\setminus \Omega_D,
\end{equation}
where $V(\gamma)$ is the tubular neighbourhood introduced in \eqref{tubular}, provided that $R=\frac{\bar{d}}{\sqrt{1+L^2_0}}$, where $L_0>0$ depends only on $L$.
\end{lemma}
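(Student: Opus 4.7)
The plan is to identify $P$ essentially as a point realizing the modified distance $d_\mu$ on $\partial D_1\cap\partial\Omega_D$, then use the $C^2$ regularity of $\partial D_1$ to produce a uniform exterior cone at $P$, and finally exploit the path-connectedness of $\R^n\setminus\bar\Omega_D$ to join an external base point $P_0$ to $P+\bar d\nu$ through a tube of uniform radius $R$.

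Under the simplifying assumption made after Lemma \ref{lemma1}, $d_\mu$ is attained at some $P^\ast\in\partial D_1\cap\partial\Omega_D$. I would set $P:=P^\ast$, so that $\mbox{dist}(P,D_2)=d_\mu$ and \eqref{uppermodist} holds with $c_1=1$; denote by $\nu$ the outer unit normal to $\partial D_1$ at $P$. The $C^2$ regularity of $\partial D_1$ gives, in the local coordinates of Definition \ref{firstdefin}, a graph representation by $\varphi\in C^2(B'_{r_0})$ with $\nabla\varphi(O)=0$ and $\|\varphi\|_{C^2(B'_{r_0})}\leq Lr_0$. A Taylor expansion then furnishes $L_0=L_0(L)>0$ for which $|\varphi(x')|\leq L_0|x'|$ on a ball of radius a fixed fraction of $r_0$, which amounts to an exterior cone of half-aperture $\arctan(1/L_0)$ at $P$ lying in $\R^n\setminus\bar D_1$. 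Choosing $\bar d/r_0$ as an appropriate function of $L$ and $R=\bar d/\sqrt{1+L_0^2}$, the oblique sides of $C(P,\nu,(\bar d^2-R^2)/\bar d,\arcsin(R/\bar d))$ are tangent to $\partial B_R(P+\bar d\nu)$ and match the exterior Lipschitz slope, so the cone sits in $\R^n\setminus\bar D_1$.

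To ensure the cone also avoids $D_2$, I restrict to the non-trivial regime $d_\mu\geq\bar d$: in the complementary case $d_\mu<\bar d$, Lemma \ref{lemma1} yields $d_H(\partial D_1,\partial D_2)$ of order $\bar d$, a constant of the a priori data, and Theorem \ref{theorem} is trivially satisfied. In the non-trivial regime, an elementary trigonometric computation shows the cone is contained in the closed ball $\overline{B_{\bar d}(P)}$; since $\mbox{dist}(P,D_2)=d_\mu\geq\bar d$, this closed ball does not meet $D_2$, and the cone lies in $\R^n\setminus\bar\Omega_D$. For the path, $\R^n\setminus\bar\Omega_D$ is open and path-connected (it contains $\R^n\setminus\bar\Omega$, and $\Omega\setminus(\bar D_1\cup\bar D_2)$ is connected by \eqref{inclusionend} applied to both $D_i$), so a continuous curve from $P_0$ to $P+\bar d\nu$ exists. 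Using the Lipschitz regularity of $\partial\Omega$, the $C^2$ regularity of the $\partial D_i$ and the separation $\mbox{dist}(\partial D,\partial\Omega)\geq\delta_0$, one refines this curve into a polygonal path $\gamma$ along which the distance to $\bar\Omega_D$ stays above $R$, so that $\bigcup_{S\in\gamma}B_R(S)\subset\R^n\setminus\bar\Omega_D$.

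The delicate step is the last one: proving that $\gamma$ admits a uniform tube radius $R\sim r_0$ staying in the exterior of $\Omega_D$. Narrow corridors between $\partial D_1$ and $\partial D_2$ are the natural obstruction, and this is precisely why the regime $d_\mu\geq\bar d$ is imposed above. Under this regime, the quantitative geometric arguments of \cite{g.alessandrinie.sincich2013}, adapted as in \cite{g.alessandrinim.dicristoa.morassie.rosset2014}, produce $\gamma$ with the required uniform tube radius and with a uniformly bounded number of balls.
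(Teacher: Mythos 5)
The paper does not prove this lemma itself: it defers entirely to \cite[Section 11]{g.alessandrinim.dicristoa.morassie.rosset2014}, so the comparison here is with the argument of that reference. Your proposal contains a genuine gap at its very first step. You set $P$ equal to the point $P^\ast$ realizing $d_\mu$ and take $c_1=1$. But the maximizer of $d_\mu$ on $\partial D_1\cap\partial\Omega_D$ is in general \emph{not} reachable by a tube of uniform radius: the component $\mathcal{G}$ may pinch near $P^\ast$ (for instance when $D_2$ wraps closely around $D_1$ in a neighbourhood of $P^\ast$), so no cone of fixed aperture and height $\sim r_0$ attached at $P^\ast$ can avoid $\Omega_D$. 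This is precisely why the lemma is stated with a constant $c_1$ that may be strictly less than $1$ and asserts only the existence of \emph{some} $P$ with $dist(P,D_2)\geq c_1 d_\mu$: the entire content of the reference's argument is the selection of a nearby, well-reachable point $P\neq P^\ast$ for which the loss in $dist(\cdot,D_2)$ is controlled. Your escape hatch --- restricting to $d_\mu\geq\bar d$ and declaring the complementary case trivial --- does not work. If $d_\mu<\bar d$, Lemma \ref{lemma1} only gives $d_H(\partial D_1,\partial D_2)\leq c\bar d$, a fixed constant of the a priori data; this does not imply the conclusion $d_H\leq C\omega(\epsilon)$ of Theorem \ref{theorem}, whose right-hand side tends to $0$ with $\epsilon$. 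The regime of small $d_\mu$ is the main case of the stability estimate, not a trivial one, and it is exactly the regime in which the reachability difficulty arises.

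Two secondary points. First, your justification of path-connectedness is incorrect: assumption \eqref{inclusionend} applied to $D_1$ and $D_2$ separately does not imply that $\Omega\setminus(\overline{D_1}\cup\overline{D_2})$ is connected --- in general it is not, which is the very reason the paper introduces the component $\mathcal{G}$ and the set $\Omega_D=\Omega\setminus\mathcal{G}$; what one actually uses is that $\R^n\setminus\overline{\Omega}_D$ coincides, up to boundary points, with $(\R^n\setminus\overline{\Omega})\cup\mathcal{G}$ and that $\mathcal{G}$ is by construction connected with $\partial\Omega\subset\partial\mathcal{G}$. Second, even granting your regime $d_\mu\geq\bar d$, the final tube construction (uniform radius $R\sim r_0$ along the whole of $\gamma$) is asserted rather than proved; narrow corridors of $\mathcal{G}$ far from $P$ are not excluded by the condition $dist(P,D_2)\geq\bar d$, so the deferral to the references cannot be confined to that step alone. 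A correct proof must follow the case analysis of \cite[Section 11]{g.alessandrinim.dicristoa.morassie.rosset2014} (see also \cite{g.alessandrinie.sincich2013}), in which both the point $P$ and the path $\gamma$ are produced together by quantitative use of the uniform $C^2$ graph representations of $\partial D_1$ and $\partial D_2$.
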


\vspace{0.5cm}

\subsection{Proof of Theorem \ref{theorem}}

\noindent Before proving the theorem, fix a point on the surface $\Sigma$ so that, up to a rigid transformation, it coincides with the origin. We define $D_0$ as the bounded domain with Lipschitz constants $r_0>0$, $L>0$ of the form
\begin{equation}\label{dtilde}
D_0=\left\{x\in(\mathbb{R}^n\setminus\overline{\Omega})\cap B_{r_0}\: :\:|x_i|<r_0,\:i=1,\dots , n-1,\:\:-r_0<x_n<0\right\},
\end{equation}
such that $\p D_0\cap\p\Omega\,\,\subset\subset\, \Sigma$. We define the augmented domain $\Omega_0$ as the set 
\begin{equation}\label{aumenteddomain}
\Omega_0\,\,=\,\,\overset{\circ}{\overline{\Omega\cup D_0}}.
\end{equation}
\noindent It turns out that $\Omega_0$ is of Lipschitz class with constants $r_0$ and $\tilde{L}$, where $\tilde{L}$ depends on $L$ only. Moreover, let $\Sigma_0\subset \p D_0$ be a nonempty portion of the boundary of $D_0$ of the form
\[
\Sigma_0 = \left\{ x\in \Omega_0\,:\, |x_i|\leq r_0, \:\: x_n=-r_0\right\}
\]
such that $\Sigma_0\cap\p\Omega=\emptyset$.

\noindent Consider $D_1$, $D_2$ the two inclusions of $\Omega$ satisfying \eqref{inclusionbegin}-\eqref{inclusionend}. Without loss of generality, we can extend the corresponding coefficients $\sigma_{1}, \sigma_{2}, q_{1}, q_{2}$ to the augmented domain $\Omega_0$ by setting their value equal to the identity matrix on $D_0$, so that they are of the form
\begin{displaymath}
\begin{array}{ll}
\sigma_{i}(x) = \Big(a_b(x) + (a_{D_i}(x) - a_b(x))\chi_{D_i}(x)\Big)A(x), &\textrm{$\textnormal{for any }x\in\Omega$},\\
\sigma_{i}\vert_{D_0} = I, &q_{i}\vert_{D_0}=1.
\end{array}
\end{displaymath}
We denote with the same symbol the extended coefficients $\sigma_{i}$ and $q_{i}$ for $i=1,2$.

\noindent By now, let us drop the pedix $i$ and consider the coefficients $\sigma$, $q$ associated with a generic inclusion $D$. Denote with $G$ the Green function associated with the operator $\mbox{div}(\sigma(\cdot)\nabla\cdot)+q(\cdot)$ on the augmented domain $\Omega_0$. For every $y\in D_0$, $G(\cdot,y)$ is the distributional solution to the Dirichlet problem
\begin{equation}\label{greensystem}
\left\{ \begin{array}{ll}
\mbox{div}(\sigma(\cdot)\nabla G(\cdot,y)) + q(\cdot) G(\cdot,y) = - \delta(\cdot-y) &\textrm{$\textnormal{in}\quad \Omega_0$},\\
G(\cdot,y)=0 &\textrm{$\textnormal{on}\quad\p\Omega_0\setminus \Sigma_0$},\\
\sigma(\cdot) \nabla G(\cdot,y)\cdot \nu(\cdot) + i G(\cdot,y)=0 &\textrm{$\textnormal{on}\quad\Sigma_0$},
\end{array}
\right.
\end{equation}
where $\delta(\cdot - y)$ is the Dirac distribution centred at $y$ and $\nu$ is the outer unit normal at $\Sigma_0$.
The following property holds true for the Green's functions (see Lemma \ref{lemma2}):
\begin{equation}\label{boundgreen}
0<|G(x,y)|<C |x-y|^{2-n},\qquad \forall x\neq y,
\end{equation}
where $C$ is a positive constant depending on $\lambda$ and $n$.
\bigskip

\noindent Let us now consider $G_1$ and $G_2$ the Green functions solutions to \eqref{greensystem} associated to the inclusions $D_1$ and $D_2$ respectively. Fix $i$ in \eqref{greensystem}, then multiply the first equation by $G_j(\cdot,y)$ for $j\neq i$ and integrate by parts on $\Omega$. Repeat the same procedure interchanging the role of $i$ and $j$. Subtracting the two quantities obtained leads to the following equality
\begin{align}\label{green}
\int_{\Sigma} &\left[ \sigma_{1}(x)\nabla G_1(x,y)\cdot\nu(x)\: G_2(x,z) - \sigma_{2}(x)\nabla G_2(x,z)\cdot\nu(x)\: G_1(x,y)\right] \:\diff S(x) =\notag \\
&= \int_{\Omega} ( \sigma_{1}(x) - \sigma_{2}(x)) \nabla G_1(x,y)\cdot \nabla G_2(x,z) \diff x + \int_{\Omega} ( q_{2}(x) - q_{1}(x)) G_1(x,y)\: G_2(x,z) \diff x. 
\end{align}
Define
\begin{align}\label{S1}
S_1(y,z) = &\int_{D_1} (a_{D_1}(x) - a_b(x))A(x)\nabla G_1(x,y)\cdot \nabla G_2(x,z) \diff x -\notag\\
&- \int_{D_1} ( q_{D_1}(x) - q_b(x))G_1(x,y)\: G_2(x,z) \diff x.
\end{align}
	
\begin{align}\label{S2}
S_2(y,z) = &\int_{D_2} (a_{D_2}(x) - a_b(x))A(x)\nabla G_1(x,y)\cdot \nabla G_2(x,z) \diff x -\notag\\
&- \int_{D_2} ( q_{D_2}(x) - q_b(x))G_1(x,y)\: G_2(x,z) \diff x.
\end{align}	

\noindent Set
\begin{equation}\label{f}
f(y,z) = S_1(y,z) - S_2(y,z).
\end{equation}
\bigskip

\noindent As a premise to the main result, we state the Proposition \ref{prop-bis} and Proposition \ref{prop5} that allow us to determine an upper bound of $f$ in terms of the Cauchy data $d(\mathcal{C}_1,\mathcal{C}_2)$ and a lower bound of $f$.

\begin{proposition}\label{prop-bis}
Let $D_1, D_2$ be two inclusions of $\Omega$ satisfying \eqref{inclusionbegin}-\eqref{inclusionend}. Let $\mathcal{C}_1, \mathcal{C}_2$ be the local Cauchy data associated with the inclusions $D_1, D_2$, respectively. Under the notation of Lemma \ref{metric-lemma}, let 
\[
y=h\nu(O), 
\]
where
\[
0<h\leq \bar{d}\left(1-\frac{\sin\theta_0}{4}\right) \mbox{ for }\,\theta_0=\arctan\frac{1}{L},
\]
and $\nu(O)$ is the outer unit normal of $D_1$ at $O$. For $\epsilon\in(0,1)$, if $d(\mathcal{C}_1,\mathcal{C}_2)<\epsilon$, it follows that
\begin{equation}\label{upper}
|f(y,y)|\leq c_1 \frac{\epsilon^{Bh^F}}{h^A}, 
\end{equation}
where $A, B, F, c_1>0$ constants that depend on the a priori data only.
\end{proposition}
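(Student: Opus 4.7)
The plan is to combine the Alessandrini-type identity \eqref{1} applied to pairs of Green's functions with a quantitative unique continuation argument that propagates the smallness of $f$ from a region outside $\Omega$ down to the point $y=h\nu(O)$ close to $\partial D_1$.

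First, I would invoke \eqref{1} with $u_1(x)=G_1(x,y)$ and $u_2(x)=G_2(x,z)$ for $y,z$ ranging in a fixed compact subset $K$ of $D_0\setminus\bar{\Omega}$. Since $\sigma_1=\sigma_2$ and $q_1=q_2$ outside $D_1\cup D_2$, the volume integral on the left collapses to $S_1(y,z)-S_2(y,z)=f(y,z)$, while the right-hand side is dominated by $d(\mathcal{C}_1,\mathcal{C}_2)<\epsilon$ times the $\mathcal{H}$-norms of the boundary traces of $G_1(\cdot,y)$ and $G_2(\cdot,z)$. Estimate \eqref{boundgreen} and standard boundary energy bounds control these Cauchy norms by a constant depending only on the a priori data, provided $K$ is at positive distance from $\Sigma$. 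This yields the initial smallness $|f(y,z)|\le C_0\epsilon$ on $K\times K$.

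Next, for fixed $z\in K$ the map $y\mapsto f(y,z)$ is a classical solution of the background elliptic equation on $\mathcal{G}$: outside $D_1\cup D_2$ the coefficients coincide with the background, and $y$ lies outside the support of the integrand defining $f$. I would apply Lemma \ref{metric-lemma} with $P=O$ to obtain a path $\gamma\subset\R^n\setminus\bar{\Omega}_D$ from a point of $K$ to $O+\bar{d}\nu(O)$, together with a tubular neighbourhood $V(\gamma)$ capped by a cone of vertex $O$. A finite chain of three-spheres inequalities along $\gamma$ with balls of fixed radius transports the bound $|f|\le C_0\epsilon$ to $|f(O+\bar{d}\nu(O),z)|\le C\epsilon^{\alpha_0}$, for some $\alpha_0\in(0,1)$ depending only on the a priori data. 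To reach $y=h\nu(O)$ inside the cone, I would then iterate the three-spheres inequality on a geometric sequence of balls $B_{r_k}(h_k\nu(O))$ whose centres and radii shrink by a fixed ratio dictated by the aperture in Lemma \ref{metric-lemma}; after $N\sim\log(\bar{d}/h)$ steps the compounded smallness exponent satisfies $\alpha^N\ge B'h^F$ for positive constants $B',F$ depending only on the a priori data.

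To close the argument one interpolates $|f(y,z)|\le M^{1-\alpha^N}(C_0\epsilon)^{\alpha^N}$, where $M$ is an $L^\infty$ bound of $f$ on the enlarged reference ball containing $y$. Inserting \eqref{boundgreen} together with the pointwise estimate $|\nabla_x G_i(x,y)|\le C|x-y|^{1-n}$ into \eqref{S1}-\eqref{S2}, and noting that the worst contribution comes from $x\in D_1$ at distance of order $h$ from $y$, yields $M\le Ch^{-A}$ for an exponent $A$ depending on $n$ and the a priori data. Hence $|f(y,z)|\le C_1h^{-A}\epsilon^{Bh^F}$ for $z\in K$, and repeating the same two-stage propagation in the second variable (using the analogous elliptic equation satisfied by $z\mapsto f(y,z)$) delivers the claimed bound for $|f(y,y)|$. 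The main obstacle will be the careful bookkeeping of the $h$-dependence across both the geometric chain of balls in the cone and the singular growth of the reference $L^\infty$ bound $M$; verifying that the shrinking three-spheres radii can be chosen to stay inside $\R^n\setminus\overline{D_1\cup D_2}$ is precisely what the tangent cone in Lemma \ref{metric-lemma} affords, and quantifying the blow-up of $\nabla G_i(\cdot,y)$ as $y\to\partial D_1$ is the final technical ingredient needed for the induction to produce the exponent $A$.
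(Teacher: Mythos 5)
Your proposal is correct and follows essentially the same route as the paper: initial smallness of $f$ for poles in $D_0$ via the Alessandrini-type identity and the Cauchy-data distance, a two-stage three-spheres propagation (fixed-radius chain along the path from Lemma \ref{metric-lemma}, then shrinking balls inside the tangent cone) yielding the exponent $\tau^{k(h)}\sim h^{F}$, interpolation against the a priori bound $|f|\lesssim h^{-A}$ coming from the singularity estimates for $G_i$ and $\nabla G_i$, and finally repeating the propagation in the other variable to reach $y=w=h\nu(O)$. The only difference is the order in which the two variables are treated, which is immaterial.
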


\begin{proposition}\label{prop5}
Under the same assumptions as in Proposition \ref{prop-bis} and Lemma \ref{metric-lemma}, there exist $c_2>0, \bar{h}\in\left(0,\frac{1}{2}\right)$ that depend on the a priori data only such that 
\begin{equation}\label{lower}
|f(y,y)|\geq c_2 h^{2-n}-c_3 h^{2-2n}, \qquad 0<h<\bar{h}\bar{r}_2,
\end{equation}
where $y=h\nu(O)$ with $\bar{r}_2\in (0,\min\left\{\frac{r_0}{\sqrt{1+L^2}}\min\{1,L\},dist(O,D_2)\right\})$.
\end{proposition}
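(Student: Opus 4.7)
\noindent The plan is to exploit the fact that $y = h\nu(O)$ sits close to $O \in \partial D_1$ while staying far from $D_2$: by the constraint $\bar{r}_2 < \mathrm{dist}(O, D_2)$ and $h < \bar{h}\bar{r}_2$ with $\bar{h} < 1/2$, one has $\mathrm{dist}(y, D_2) \geq \bar{r}_2/2$. On this uniform neighborhood of $D_2$, both $G_1(\cdot, y)$ and $G_2(\cdot, y)$ solve homogeneous elliptic equations of the form \eqref{greensystem} (right-hand side zero), so interior regularity yields $\|G_i(\cdot, y)\|_{H^1(D_2)} \leq C$ with $C$ depending only on the a priori data. Consequently $|S_2(y,y)| \leq C$ uniformly in $h$, and the singular behaviour of $f(y,y) = S_1(y,y) - S_2(y,y)$ must come entirely from $S_1(y,y)$.

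\noindent For $S_1(y,y)$ I would develop a quantitative asymptotic expansion of $G_i(x,y)$ near the pole $y$. Since $y$ lies in the background region outside both inclusions, where the coefficients coincide and are Lipschitz continuous, the Levi parametrix method produces a decomposition
\[
G_i(x,y) = \Gamma_y(x - y) + R_i(x,y),
\]
where $\Gamma_y$ is the fundamental solution of the frozen constant-coefficient operator $\mathrm{div}(a_b(y) A(y) \nabla \cdot)$ and $R_i$ is a controlled remainder; notably the leading term $\Gamma_y$ is independent of $i$ since the background data are shared. Using the $C^2$ regularity of $\partial D_1$, at $O$ there is an interior cone $\mathcal{K} \subset D_1$ with vertex $O$, axis $-\nu(O)$, and aperture $\theta_0 = \arctan(1/L)$, of height comparable to $r_0$. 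For $x \in \mathcal{K}$ one has $|x-y| \sim \max(h, |x - O|)$, and combining this with $|\nabla \Gamma_y(z)| \sim |z|^{1-n}$, a direct computation in polar coordinates gives
\[
\int_{\mathcal{K} \cap B_r(O)} |\nabla \Gamma_y(x-y)|^2 \, dx \geq c\, h^{2-n}
\]
for some $r>0$ and $c>0$ depending only on the a priori data.

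\noindent The sign condition \eqref{lineareta}, together with the continuity of $a_{D_1} - a_b$ on $\bar{\Omega}$, forces this difference to have constant sign with $|a_{D_1} - a_b| \geq \eta_0$. Coupled with the uniform ellipticity of $A$, the integrand $(a_{D_1} - a_b) A \nabla G_1 \cdot \nabla G_2$ inherits a definite sign near the pole and is bounded below, to leading order, by $\eta_0 \bar{\lambda}^{-1} |\nabla \Gamma_y|^2$. Substituting $G_i = \Gamma_y + R_i$ into \eqref{S1}, the principal contribution is $\geq c_2 h^{2-n}$ by the cone integral above. The remaining pieces---the mixed terms $\int \nabla \Gamma_y \cdot \nabla R_i$, the remainder--remainder product $\int \nabla R_1 \cdot \nabla R_2$, the zero-order contribution $\int (q_{D_1} - q_b) G_1 G_2$, and the integral over $D_1 \setminus \mathcal{K}$---are controlled, via Cauchy--Schwarz and the pointwise Green function bound \eqref{boundgreen}, by a quantity of the form $c_3 h^{2-2n}$. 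After absorbing the uniform bound on $|S_2(y,y)|$ into this remainder, \eqref{lower} follows.

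\noindent The principal obstacle will be the construction of the expansion $G_i = \Gamma_y + R_i$ with a \emph{quantitative} remainder uniform in $h$, given that $\sigma_i$ is merely Lipschitz on each side of $\partial D_i$ and has a jump across it, while the pole $y$ is permitted to approach this discontinuity surface as $h \to 0$. A naive coefficient-freezing is inadequate: the parametrix must be adapted to the transmission geometry, with constants uniform in $h$, compatibly with the Green function construction of Lemma \ref{lemma2} and the regularity framework of \cite{c.i.carsteaj.n.wang2020}. This careful asymptotic analysis near a discontinuity of the leading coefficient is what pins down both the leading exponent $h^{2-n}$ and the controlled error $h^{2-2n}$ in \eqref{lower}.
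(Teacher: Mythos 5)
Your overall architecture matches the paper's: isolate $S_1(y,y)$ as the singular part, replace $G_i$ near the pole by an explicit fundamental solution, use the sign condition \eqref{lineareta} to get a definite leading term of order $h^{2-n}$ from a cone inside $D_1$, and dump everything else into the error. But the step you yourself flag as ``the principal obstacle'' is in fact the entire content of the proof, and your proposal does not supply it. The pole $y=h\nu(O)$ lies on the background side of $\partial D_1$ while the integration in $S_1$ takes place on the inclusion side, at distance comparable to $h$ from $y$; so the correct leading-order model for $G_i(x,y)$ is \emph{not} the one-phase frozen fundamental solution $\Gamma_y$ of $\mathrm{div}(a_b(y)A(y)\nabla\cdot)$, but the two-phase half-space transmission fundamental solution $H$ of \eqref{fundsol} (whose relevant branch for $x_n\cdot y_n<0$ carries the factor $2/(a^-+a^+)$). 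With your $\Gamma_y$, the remainder $R_i=G_i-\Gamma_y$ solves an equation whose coefficient error is $O(1)$ (the jump $a_{D_1}-a_b$, bounded below by $\eta_0$) at distance $h$ from the pole, so $\nabla R_i$ is of the same order $|x-y|^{1-n}$ as $\nabla\Gamma_y$ and the mixed and remainder--remainder terms cannot be absorbed. The paper closes exactly this gap with Proposition \ref{prop2} (built on the explicit $H$ and a representation-formula argument exploiting $|\sigma-\sigma_0|\le C|z|$ after flattening $\partial D_1$), which yields $|\nabla_x G-\nabla_x H|\le C|x-y|^{1-n+\theta_1}$ and hence genuinely lower-order errors $h^{2-n+\theta_1}$. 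Your proposal names this lemma's role but leaves it unproved, so the decomposition you actually write down would fail.

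A secondary but real error: your claim that $|S_2(y,y)|\le C$ with $C$ depending only on the a priori data is false. You derive it from $\mathrm{dist}(y,D_2)\ge\bar r_2/2$, but $\bar r_2\le\mathrm{dist}(O,D_2)$ is precisely the unknown geometric quantity being estimated and may be arbitrarily small; it is not an a priori constant. The honest bound, via $|\nabla G_i(x,y)|\le C|x-y|^{1-n}$, is $|S_2(y,y)|\le C\,\mathrm{dist}(y,D_2)^{2-2n}\le c_3h^{2-2n}$ (using $h<\bar h\,\bar r_2$), and this is exactly why the statement of \eqref{lower} carries the subtracted term $c_3h^{2-2n}$ rather than a harmless additive constant; the interplay between $h^{2-n}$ and $(\mathrm{dist}(O,D_2)-h)^{2-2n}$ is then what drives the choice of $h(\epsilon)$ in the proof of Theorem \ref{theorem}.
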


\begin{proof}[Proof of Theorem \ref{theorem}]
Let $O\in\p D_1$ be the point of Lemma \ref{metric-lemma} such that \eqref{uppermodist} is satisfied and w.l.o.g. assume that it coincides with the origin. Choose
\[
y_h=h\nu(O), \qquad 0<h<\bar{h}\bar{r}_2.
\]
Combining the upper bound \eqref{upper} and the lower bound \eqref{lower} at $y_h$, it follows that
\begin{equation*}
c_2 h^{2-n} - c_3 (dist(O,D_2)-h)^{2-2n} \leq c_1 \frac{\epsilon^{Bh^F}}{h^A},
\end{equation*}
where $c_1, c_2, c_3, A, B, F$ are the constants appearing in the Propositions \ref{prop-bis} and \ref{prop5} which depends on the a priori data only. 

\noindent Let $\epsilon_1\in (0,1)$ be such that $\exp(-B|\ln\epsilon_1|^{\frac{1}{2}})=\frac{1}{2}$. We distinguish between two cases.
\begin{itemize}
\item[a)] Assume that $\epsilon\in (0,\epsilon_1)$. Define $h=h(\epsilon)=\min\{|\ln\epsilon|^{-\frac{1}{2F}},dist(O,D_2)\}$. If $dist(O,D_2)\leq |\ln\epsilon|^{-\frac{1}{2F}}$ then from Lemma \ref{lemma1} and Lemma \ref{metric-lemma} the thesis follows straightforwardly. If $dist(O,D_2)\geq |\ln\epsilon|^{-\frac{1}{2F}}$, then $h=|\ln\epsilon|^{-\frac{1}{2F}}$ so that
\begin{equation}
c_4 (dist(O,D_2)-h)^{2(1-n)} \geq c_5 \big( 1- \epsilon^{Bh^F} h^{\tilde{A}}\big) h^{2-n},
\end{equation}
where $\tilde{A}=1-A$. Since
\[
\epsilon^{Bh^F}h^{\tilde{A}} \leq \exp(-B|\ln\epsilon|^{\frac{1}{2}}),
\]
then
\[
(dist(O,D_2)-h)^{2(1-n)} \geq c_6 h^{2-n},
\]
and since $h=|\ln\epsilon|^{-\frac{1}{2F}}$,
\[
dist(O,D_2) \leq c_7 |\ln\epsilon|^{-\eta}, \qquad \eta=\frac{n-2}{4F(n-1)}.
\]
\item[b)] Assume that $\epsilon\in [\epsilon_1,1)$, then, since $dist(O,D_2)\leq diam(\Omega)$, it follows that
\[
dist(O,D_2) \leq diam(\Omega) \frac{|\ln\epsilon|^{-\frac{1}{2F}}}{|\ln\epsilon_1|^{-\frac{1}{2F}}}.
\]
\end{itemize}

\end{proof}


\section{\large Proofs of technical propositions}\label{4}
In this section, we construct the Green functions associated with \eqref{greensystem} and we sketch the proofs of the Proposition \ref{prop-bis} and Proposition \ref{prop5} stated in section \ref{preliminaryprop}. In order to simplify the notation, we prefer to drop the pedix $i=1,2$ for all the quantities related to the two inclusions, so that the following statements hold true for generic inclusions $D$ satisfying \eqref{inclusionbegin}-\eqref{inclusionend}, coefficient $\sigma$ satisfying \eqref{condbegin}-\eqref{condend} and $q$ satisfying \eqref{zerorderbegin}-\eqref{zerorderend}. 

\subsection{\normalsize The construction of the Green function}
\begin{lemma}\label{lemma2}
For any $\sigma\in L^{\infty}(\Omega_0, Sym_n)$ that satisfies the uniform ellipticity condition, for any $q\in L^{\infty}(\Omega_0)$, $y\in \Omega_0$ there exists a unique distributional solution $G(\cdot,y)$ of the boundary value problem 
\begin{equation}\label{greensystem2}
\left\{ \begin{array}{ll}
\mbox{div}\left(\sigma(\cdot)\nabla G(\cdot,y)\right) + q(\cdot) G(\cdot,y) = - \delta(\cdot-y) &\textrm{$\textnormal{in}\quad \Omega_0$},\\
G(\cdot,y)=0 &\textrm{$\textnormal{on}\quad\p\Omega_0\setminus \Sigma_0$},\\
\sigma(\cdot) \nabla G(\cdot,y)\cdot \nu(\cdot) + i G(\cdot,y)=0 &\textrm{$\textnormal{on}\quad\Sigma_0$},
\end{array}
\right.
\end{equation}
such that for any $x,y\in \Omega_0, x\neq y$,
 there exists a constant $C$ depending on the a priori data such that
\begin{equation}
0<|G(x,y)|\leq C |x-y|^{2-n}.
\end{equation}
\end{lemma}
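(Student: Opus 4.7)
The plan is to combine the Bamberger--Ha Duong well-posedness theory for the mixed Dirichlet/complex-Robin problem with the classical Gr\"uter--Widman construction of Green's functions for divergence-form operators with bounded measurable coefficients; the mild regularity requirements of the latter approach accommodate the jump of $\sigma$ across $\partial D$ without any additional effort. Concretely, I would work in the complex Hilbert space $V=\{v\in H^{1}(\Omega_{0};\mathbb{C}):v=0\text{ on }\partial\Omega_{0}\setminus\Sigma_{0}\}$ and introduce the sesquilinear form
\[
a(u,v)=\int_{\Omega_{0}}\sigma\nabla u\cdot\overline{\nabla v}\,dx-\int_{\Omega_{0}}q\,u\,\overline{v}\,dx+i\int_{\Sigma_{0}}u\,\overline{v}\,dS,
\]
which is the natural weak formulation of \eqref{greensystem2} once the complex Robin condition on $\Sigma_{0}$ is integrated by parts. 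Following \cite{a.bambergert.ha.duong1987}, the inequality $|a(u,u)|^{2}\geq(\mathrm{Re}\,a(u,u))^{2}+(\mathrm{Im}\,a(u,u))^{2}$, combined with uniform ellipticity of $\sigma$ and the $L^{\infty}$-bound on $q$, yields a G{\aa}rding-type estimate for $a$ on $V$. For uniqueness, a homogeneous solution $u_{0}$ would satisfy $u_{0}|_{\Sigma_{0}}=0$ (from $\mathrm{Im}\,a(u_{0},u_{0})=\|u_{0}\|_{L^{2}(\Sigma_{0})}^{2}=0$) together with $\sigma\nabla u_{0}\cdot\nu|_{\Sigma_{0}}=0$ by the Robin condition; the strong unique continuation theorem of Carstea--Wang \cite{c.i.carsteaj.n.wang2020}, which tolerates the Lipschitz jump of $\sigma$ across the $C^{2}$ surface $\partial D$, then forces $u_{0}\equiv 0$ in $\Omega_{0}$. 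The Fredholm alternative supplies existence for every $F\in V^{*}$.

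Having solved the auxiliary problem, I would mollify the Dirac source to obtain $G^{\varepsilon}(\cdot,y)\in V$ associated with $\delta_{y}^{\varepsilon}\in V^{*}$ and implement the Gr\"uter--Widman/Littman--Stampacchia strategy: using truncations $(|G^{\varepsilon}|-k)_{+}$ as test functions, exploiting the sign-definite boundary contribution $\mathrm{Im}\,a(G^{\varepsilon},G^{\varepsilon})=\|G^{\varepsilon}\|_{L^{2}(\Sigma_{0})}^{2}$ to absorb the Robin term, and iterating on super-level sets, one obtains $\varepsilon$-uniform bounds of the form
\[
\|G^{\varepsilon}(\cdot,y)\|_{L^{\frac{2n}{n-2}}(\Omega_{0}\setminus B_{\rho}(y))}\leq C\rho^{\frac{2-n}{2}}.
\]
Passing $\varepsilon\to 0$ yields the sought distributional solution $G(\cdot,y)$; then, for $x\neq y$ and $r\sim|x-y|$, Moser's local $L^{\infty}$-$L^{2}$ bound applied to the homogeneous equation in $B_{r}(x)$ converts this integral estimate into the pointwise bound $|G(x,y)|\leq C|x-y|^{2-n}$. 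Moser iteration is insensitive to the jump of $\sigma$ across $\partial D$, since it relies only on measurable uniform ellipticity. The non-vanishing $G(x,y)\neq 0$ for $x\neq y$ follows from the same unique continuation theorem: were $G(\cdot,y)$ to vanish on an open subset of $\Omega_{0}\setminus\{y\}$, it would vanish everywhere, contradicting the asymptotics near $y$ obtained by comparison with the fundamental solution of the frozen-coefficient operator at $y$.

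I expect the main obstacle to lie in the uniform-in-$\varepsilon$ control of $G^{\varepsilon}$ near the wedge where $\overline{\Sigma_{0}}$ meets the Dirichlet portion $\partial\Omega_{0}\setminus\Sigma_{0}$, since the truncation test functions must be simultaneously admissible for both boundary conditions and the complex Robin contribution has to be absorbed through the level-set iteration without spoiling positivity. The complex-valued, non-self-adjoint and possibly non-coercive nature of the operator further means that a Fredholm decomposition must be carried through the mollified problem at each step, with careful bookkeeping of the low-order perturbation coming from $q$ so that the resulting constants depend only on the a priori data.
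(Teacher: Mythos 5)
Your well-posedness step is essentially the paper's: the same sesquilinear form, the same Bamberger--Ha Duong trick of reading off $\|u_0\|_{L^2(\Sigma_0)}^2$ from the imaginary part to reduce uniqueness to a Cauchy problem with vanishing data on $\Sigma_0$, and the Fredholm alternative for existence. The paper additionally extracts a \emph{quantitative} bound $\|v\|_{H^1(\Omega_0)}\leq C\|f\|_{L^2(\Omega_0)}$ with $C$ depending only on the a priori data, by splitting into the cases $\delta^2\leq\eta^2$ and $\delta^2\geq\eta^2$ and invoking the Carstea--Wang propagation-of-smallness estimate in the second case; you cite Carstea--Wang only qualitatively, but you will need the quantitative version anyway to make your constants a priori.

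Where you genuinely diverge is the construction of $G$ and the pointwise bound, and here there is a gap. You propose to run the Gr\"uter--Widman/Littman--Stampacchia level-set iteration directly on the mollified \emph{full} operator $\mbox{div}(\sigma\nabla\cdot)+q$. That machinery is built for pure second-order operators (or ones with a signed zero-order term): testing with truncations of $|G^{\varepsilon}|$ produces on each super-level set a term $\int q\,|G^{\varepsilon}|\,(|G^{\varepsilon}|-k)_+$ of no definite sign, which cannot be absorbed by the ellipticity because the operator may be in the eigenvalue regime --- this is exactly the non-coercivity the paper is at pains to avoid, and it is not fixed by the sign-definite Robin contribution on $\Sigma_0$ (your stated "main obstacle" at the Dirichlet/Robin junction is not the real one). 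The paper instead builds $G$ perturbatively: it takes $\tilde G(\cdot,y)$, the Green function of $\mbox{div}(\sigma\nabla\cdot)$ \emph{without} the zero-order term, for which the Littman--Stampacchia--Weinberger bound $|\tilde G(x,y)|\leq C|x-y|^{2-n}$ is available with merely bounded measurable $\sigma$, and then corrects for $q$ through the iterated kernels $R_j(x,y)=\int_{\Omega_0}q(z)\tilde G(x,z)R_{j-1}(z,y)\,dz$, $j=1,\dots,J=\lfloor\frac{n-1}{2}\rfloor$, each less singular by two powers, closing the series with one application of the Step-1 solvability for the bounded right-hand side $-qR_J$. If you want to keep your route you must either impose a sign or smallness on $q$ (not available here) or reduce to the $q$-free operator first --- which is the paper's construction. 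Finally, your justification of $G(x,y)\neq 0$ only excludes vanishing on an open set, not at an isolated point $x$, so as written it does not give the stated pointwise lower bound.
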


\begin{proof}
Our proof is based on the reasoning introduced in \cite[Proposition 3.1]{g.alessandrinim.v.dehoopr.gaburroe.sincich2018}. We find more convenient to divide the proof into two parts: in the first part we prove the well-posedness of the problem, whereas in the second part we construct a desired Green function. 

\paragraph{First step (well-posedness)} For $f\in L^2(\Omega_0)$, consider the following mixed boundary value problem
\begin{equation}\label{first}
\left\{ \begin{array}{ll}
\mbox{div}(\sigma\nabla v) + q\, v = f, &\textrm{$\textnormal{in}\quad \Omega_0$},\\
v = 0, &\textrm{$\textnormal{on}\quad\p\Omega_0\setminus \Sigma_0$},\\
\sigma(\cdot) \nabla v(\cdot)\cdot \nu(\cdot) + i v(\cdot)=0, &\textrm{$\textnormal{on}\quad\Sigma_0$}.
\end{array}
\right.
\end{equation}
Our goal is to find a solution $v\in H^1(\Omega_0)$ in the weak sense. Consider the adjoint mixed boundary value problem
\begin{equation}\label{second}
\left\{ \begin{array}{ll}
\mbox{div}(\sigma\nabla w) + q\, w = f, &\textrm{$\textnormal{in}\quad \Omega_0$},\\
w = 0, &\textrm{$\textnormal{on}\quad\p\Omega_0\setminus \Sigma_0$},\\
\sigma(\cdot) \nabla w(\cdot)\cdot \nu(\cdot) - i w(\cdot)=0, &\textrm{$\textnormal{on}\quad\Sigma_0$}.
\end{array}
\right.
\end{equation}
The Fredholm alternative tells us that existence of a solution to \eqref{first} implies uniqueness to \eqref{second} and viceversa (see \cite[Theorem 4, §6]{l.c.evans1997}). We prove uniqueness for both boundary value problems. Consider the homogeneous problem

\begin{equation}\label{third}
\left\{ \begin{array}{ll}
\mbox{div}(\sigma\nabla u) + q u = 0, &\textrm{$\textnormal{in}\quad \Omega_0$},\\
u = 0, &\textrm{$\textnormal{on}\quad\p\Omega_0\setminus \Sigma_0$},\\
\sigma(\cdot) \nabla u(\cdot)\cdot \nu(\cdot) \pm i u(\cdot)=0 &\textrm{$\textnormal{on}\quad\Sigma_0$}.
\end{array}
\right.
\end{equation}
Assume that $u\in H^1(\Omega_0)$. If we multiply \eqref{third} by $\bar{u}$ and integrate on $\Omega_0$, by the Green's identity it follows that
\begin{equation}
\int_{\Omega_0} \sigma(x)\nabla u(x)\cdot\nabla\bar{u}(x) \diff x - \int_{\Omega_0} q(x) |u(x)|^2 \diff x \pm i\int_{\Sigma_0} |u(x)|^2\diff x = 0.
\end{equation}
Therefore, $u=0$ on $\Sigma_0$. Hence, since the Neumann boundary condition becomes $\sigma(x)\nabla u(x)\cdot\nu(x)=0$ it follows that $u$ satisfies the Cauchy problem
\begin{equation}
\left\{ \begin{array}{ll}
\mbox{div}(\sigma\nabla u) + q u = 0, &\textrm{$\textnormal{in}\quad \Omega_0$},\\
u = 0, &\textrm{$\textnormal{on}\quad\p\Omega_0$}.
\end{array}
\right.
\end{equation}
so that $u=0$ in $\Omega_0$. In conclusion, we have proved existence and uniqueness for \eqref{first}. It remains to prove stability. For this purpose, consider $v\in H^1(\Omega_0)$ the weak solution to \eqref{first}, then by the weak formulation the following identities hold:
\begin{align}\label{vestimate}
\int_{\Sigma_0} |v|^2 &= -\Im\Big( \int_{\Omega_0} f\bar{v}\Big)\\
\int_{\Omega_0} \sigma(x)\,\nabla v(x)\cdot\nabla \bar{v}(x)\diff x &= -\Re\Big(\int_{\Omega_0} f\bar{v} \Big) + \int_{\Omega_0} q(x) |v(x)|^2\diff x
\end{align}
Define the following quantities:
\begin{equation*}
\epsilon^2 = \int_{\Sigma_0} |v|^2 + \int_{\Sigma_0} \sigma(x)\nabla v(x)\cdot \nu(x)\:\bar{v}(x) \diff x,
\end{equation*}
\begin{equation*}
\eta = \|f\|_{L^2(\Omega_0)}, \hspace{2cm} \delta = \|v\|_{L^2(\Omega_0)}, \hspace{2cm}
E = \|\nabla v\|_{L^2(\Omega_0)}.
\end{equation*}
From the Schwarz inequality and \eqref{vestimate}, it follows that
\begin{equation}
\int_{\Sigma_0} |v|^2 \leq \eta\:\delta,
\end{equation}
and combined with the impedance condition, 
\begin{equation}\label{epsestimate}
\epsilon^2 \leq 2 \eta\:\delta.
\end{equation}
From \eqref{vestimate}, we derive
\begin{equation}\label{Eestimate}
E^2 \leq \eta\,\delta + \|q\|^2_{L^{\infty}(\Omega_0)} \delta^2.
\end{equation}
Our claim is that there exists a positive constant which depends on the a priori data such that 
\begin{equation}
E^2\leq C \eta^2.
\end{equation}
We distinguish between two cases.
\begin{itemize}
\item If $\delta^2\leq \eta^2$, then the claim follows from \eqref{Eestimate}.
\item If $\delta^2\geq \eta^2$, we recall a quantitative estimate of unique continuation due to Carstea and Wang \cite[Theorem 5.3]{c.i.carsteaj.n.wang2020}, which is a generalization of \cite[Theorem 1.9]{g.alessandrinil.rondie.rossets.vessella2009}, so that
\begin{equation}
\delta^2 \leq \left(E^2 + \varepsilon^2 + \eta^2\right)\,\, \omega \Big(\frac{\varepsilon^2 + \eta^2}{E^2 + \varepsilon^2 + \eta^2} \Big)
\end{equation}
where $\omega(t)\leq C |\ln t|^{-\mu}$ for $t\in(0,1)$, $\omega(t)\rightarrow 0$ for $t\rightarrow 0^+$ and $C>0, \mu\in(0,1)$ positive constants depending on the a priori data only. From \eqref{epsestimate} and \eqref{Eestimate},
\begin{align*}
\delta^2 &\leq \left(\eta\delta + \|q\|_{L^{\infty}(\Omega_0)} \delta^2 + 2 \eta\delta + \eta^2\right) \,\,\omega \Big(\frac{\varepsilon^2 + \eta^2}{E^2 + \varepsilon^2 + \eta^2} \Big)\\
&\leq \left(4\delta^2 + \|q\|_{L^{\infty}(\Omega_0)} \delta^2 \right) \,\,\omega \Big(\frac{\varepsilon^2 + \eta^2}{E^2 + \varepsilon^2 + \eta^2} \Big)
\end{align*}
Multiplying by $\delta^2$ leads to
\begin{equation*}
1\leq (4 + \|q\|_{L^{\infty}(\Omega_0)})\,\, \omega\Big(\frac{\varepsilon^2 + \eta^2}{E^2 + \varepsilon^2 + \eta^2} \Big)
\end{equation*}
Inverting with respect to $\omega$ leads to
\begin{equation*}
\omega^{-1}\Big(\frac{1}{4 + \|q\|_{L^{\infty}(\Omega_0)}} \Big) \leq \frac{\varepsilon^2 + \eta^2}{E^2 + \varepsilon^2 + \eta^2}.
\end{equation*}
Set $C=\omega^{-1}\Big(\frac{1}{4 + \|q_D\|_{L^{\infty}(\Omega_0)}} \Big)$, it follows that
\begin{equation*}
 C E^2 \leq C( E^2 + \varepsilon^2 + \eta^2) \leq \varepsilon^2 + \eta^2 \leq 2 \eta \delta + \eta^2 \leq 3 \eta^2,
\end{equation*}
so that the claim follows.
\end{itemize}
Finally, by Poincarè inequality
\[
\|v\|_{L^2(\Omega_0)} \leq C \|\nabla v\|_{L^2(\Omega_0)}
\]
we can conclude that
\[
\|v\|_{H^1(\Omega_0)} \leq C \|f\|_{L^2(\Omega_0)}.
\]

\paragraph{Second step (construction of the Green function)} Fix $y\in \Omega_0$ and let $\tilde{G}(\cdot,y)$ be the weak solution to the boundary value problem
\begin{equation}\label{greensystem3}
\left\{ \begin{array}{ll}
\mbox{div}(\sigma(\cdot)\nabla \tilde{G}(\cdot,y)) = - \delta(\cdot-y) &\textrm{$\textnormal{in}\quad \Omega_0$},\\
\tilde{G}(\cdot,y)=0 &\textrm{$\textnormal{on}\quad\p\Omega_0\setminus \Sigma_0$},\\
\sigma(\cdot) \nabla \tilde{G}(\cdot,y)\cdot \nu(\cdot) + i \tilde{G}(\cdot,y)=0 &\textrm{$\textnormal{on}\quad\Sigma_0$}.
\end{array}
\right.
\end{equation}
From \cite{w.littmang.stampacchiah.w.weinberger1963}, $\tilde G(\cdot,y)$ satisfies the following properties:
\begin{equation}
\tilde G(x,y)=\tilde G(y,x),
\end{equation}
and
\begin{equation}
|\tilde G(x,y)|\leq C|x-y|^{2-n}, \qquad \text{for any} x\neq y, x,y\in\Omega_0.
\end{equation}
Fix $J=\lfloor\frac{n-1}{2} \rfloor$, for $x\in\Omega_0$, $x\neq y$, define
\begin{equation*}
\left\{ \begin{array}{ll}
R_0(x,y) = \tilde{G}(x,y)\\
\displaystyle R_j(x,y) = \int_{\Omega_0} q(z) \tilde{G}(x,z) R_{j-1}(z,y) \diff z, &\text{for }j=1,\dots,J.
\end{array}
\right.
\end{equation*}
The distribution $R_j(\cdot,y)$ is a weak solution to the boundary value problem
\begin{equation*}
\left\{ \begin{array}{ll}
\text{div}_x (\sigma(x)\nabla_x R_j(x,y)) = -q(x) R_{j-1}(x,y) &\text{for }x\in \Omega_0,\\
R_j(x,y)=0 &\text{for }x\in \p\Omega_0\setminus \Sigma_0,\\
\sigma(x) \nabla_x R_j(x,y)\cdot \nu(x) + i R_j(x,y) = 0 &\text{for }x\in \Sigma_0,
\end{array}
\right.
\end{equation*}
for $j=1,\dots,J$. From \cite[Chapter 2]{c.miranda1970} one can show that
\begin{equation}
|R_j(x,y)|\leq C |x-y|^{2j+2-n},\qquad \text{for every }j=0,1,\dots,J-1.
\end{equation}
For $j=J$, one has to distinguish between two cases:
\begin{itemize}
\item for $n$ even, 
\begin{equation}
|R_J(x,y)|\leq C(|\ln|x-y|| + 1);
\end{equation}
\item for $n$ odd,
\begin{equation}
|R_J(x,y)|\leq C
\end{equation}
\end{itemize}
where in both cases $C$ is a positive constant which depends on the a priori data only. In either cases,
\[
\|R_J(\cdot,y)\|_{L^p(\Omega_0)}\leq C, \qquad \text{for } 1\leq p<\infty.
\]
Define the distribution $R_{J+1}(\cdot,y)$, for $y\in\Omega_0$ as the weak solution to the boundary value problem
\begin{equation*}
\left\{ \begin{array}{ll}
\text{div}_x(\sigma(x)\nabla_x R_{J+1}(x,y)) + q(x) R_{J+1}(x,y) = -q(x) R_J(x,y) &\text{for }x\in\Omega_0\\
R_{J+1}(x,y)=0 &\text{for }x\in\p\Omega_0\setminus \Sigma_0\\
\sigma(x) \nabla_x R_{J+1}(x,y) \cdot \nu(x) + i R_{J+1}(x,y) = 0 &\text{for }x\in \Sigma_0.
\end{array}
\right.
\end{equation*}
It follows that $\|R_{J+1}(\cdot,y)\|_{H^1(\Omega_0)} \leq C$ where $C$ is a positive constant and by interior regularity estimates
\begin{equation}
|R_{J+1}(x,y)|\leq C, \quad \text{for }x\neq y,\,\, x,y\in\Omega_0.
\end{equation}
Define 
\begin{equation}
G(x,y) = \tilde{G}(x,y) + \sum_{j=1}^{J+1} R_j(x,y),
\end{equation}
For $y\in \Omega_0$, $G(\cdot,y)$ is a distributional solution to the boundary value problem \eqref{greensystem2} so that $G$ is the Green's function that we were looking for.

\paragraph{Third step (Simmetry of the Green function).}
Let $f,g\in C^{\infty}_0(\Omega_0)$. Let $u\in H^1(\Omega_0)$ be a weak solution to

\begin{equation}\label{u}
\left\{ \begin{array}{ll}
\mbox{div}(\sigma\nabla u) + q\, u = f, &\textrm{$\textnormal{in}\quad \Omega_0$},\\
u = 0, &\textrm{$\textnormal{on}\quad\partial\Omega_0\setminus \Sigma_0$},\\
\sigma(\cdot) \nabla u(\cdot)\cdot \nu(\cdot) + i u(\cdot)=0, &\textrm{$\textnormal{on}\quad\Sigma_0$}.
\end{array}
\right.
\end{equation}
Let $v\in H^1(\Omega_0)$ be a weak solution to
\begin{equation}\label{v}
\left\{ \begin{array}{ll}
\mbox{div}(\sigma\nabla v) + q\, v = g, &\textrm{$\textnormal{in}\quad \Omega_0$},\\
v = 0, &\textrm{$\textnormal{on}\quad\partial\Omega_0\setminus \Sigma_0$},\\
\sigma(\cdot) \nabla v(\cdot)\cdot \nu(\cdot) + i v(\cdot)=0, &\textrm{$\textnormal{on}\quad\Sigma_0$}.
\end{array}
\right.
\end{equation}
Let $G(\cdot,y)$ be the Green function solution to \eqref{greensystem2}. The weak solution $u$ of \eqref{u} can be written as
\[
u(x) = \int_{\Omega_0} G(x,y) f(y) \diff y,
\]
and similarly,
\[
v(x) = \int_{\Omega_0} G(x,y) g(y) \diff y.
\]
Hence, by the Green's identity, it follows that
\begin{align*}
\int_{\Omega_0} u(x)\,g(x) \diff x = \int_{\Omega_0} f(x)\,v(x) \diff x.
\end{align*}
Hence,
\begin{equation}
\int_{\Omega_0} \left[\int_{\Omega_0} G(x,y) f(y) \diff y \right] g(x) \diff x = \int_{\Omega_0} \left[\int_{\Omega_0} G(y,x) g(x) \diff x \right] f(y) \diff y,
\end{equation}
By Fubini's theorem and the arbitrarity of $f$ and $g$, it follows that
\[
G(x,y) = G(y,x)\qquad \text{for any }x,y\in \Omega_0.
\]
\end{proof}

\bigskip

\subsection{Upper bound for the function $f$}
Let $\Omega\subset\R^n$, $n\geq 3$ be a bounded domain satisfying \eqref{omegabegin}-\eqref{omegaend} and let $D_1$, $D_2$ be two inclusions of $\Omega$ satisfying \eqref{inclusionbegin}-\eqref{inclusionend}. Let $\sigma_{1}$, $\sigma_{2}$ be the anisotropic conductivities satisfying \eqref{condbegin}-\eqref{condend} and let $q_1$, $q_2$ be the coefficients of the zero order term satisfying \eqref{zerorderbegin}-\eqref{zerorderend}. Before proving Proposition \ref{prop-bis}, let us introduce some useful formulas.

\noindent Let $u_j\in H^1(\Omega)$ with $j=1,2$ be a weak solution to the Dirichlet problem
\begin{equation}\label{dirproblem}
\begin{cases}
\text{div}(\sigma_j\nabla u_j) + q_j u_j = 0 &\text{in }\Omega,\\
u_j|_{\p\Omega}\in H^{\frac{1}{2}}_{00}(\Sigma).
\end{cases}
\end{equation}
\noindent Integrating by parts \eqref{dirproblem} leads to the following identity,
\begin{equation}\label{claim1}
\int_{\Omega} (\sigma_2-\sigma_1)\nabla u_1\cdot \nabla u_2 + \int_{\Omega} (q_1-q_2) u_1 u_2 = 
\langle \sigma_2\nabla\bar{u}_2\cdot \nu\vert_{\p\Omega}, u_1\rangle - \langle \sigma_1\nabla u_1\cdot\nu\vert_{\p\Omega}, \bar{u}_2\rangle.
\end{equation}

\noindent Let $v_j$ for $j=1,2$ with $v_j\in H^1(\Omega)$ be weak solution to $\text{div}(\sigma_j\nabla v_j) + q_j v_j = 0$ in $\Omega$.
\begin{equation}\label{claim2}
\langle \sigma_j \nabla v_j\cdot\nu\vert_{\p\Omega},\bar{u}_j\rangle - \langle \sigma_j \nabla\bar{u}_j\cdot\nu\vert_{\p\Omega}, v_j\rangle = 0,\qquad \text{for }j=1,2.
\end{equation}
Moreover, from \eqref{claim1} and \eqref{claim2}, one can show that
\begin{multline}\label{claim}
\int_{\Omega} (\sigma_2-\sigma_1)\nabla u_1\cdot \nabla u_2 + \int_{\Omega} (q_1-q_2) u_1 u_2 = \\
=\langle \sigma_2\nabla\bar{u}_2\cdot\nu\vert_{\p\Omega}, (u_1-v_2)\rangle - \langle \sigma_1\nabla u_1\cdot\nu\vert_{\p\Omega} - \sigma_2\nabla v_2\cdot\nu\vert_{\p\Omega},\bar{u}_2\rangle.
\end{multline}

\noindent Finally, \eqref{claim} and the the Cauchy-Schwarz inequality allow us to bound $f(y,y)$ with $d(\mathcal{C}_1,\mathcal{C}_2)$ as
\begin{equation}\label{claim3}
\left|\int_{\Omega} (\sigma_2-\sigma_1)\nabla u_1\cdot \nabla u_2 + \int_{\Omega} (q_1-q_2) u_1 u_2\right| \leq d(\bar{\mathcal{C}}_1,\bar{\mathcal{C}}_2)\,\, \|(u_1,\sigma_1\nabla u_1\cdot\nu)\|_{\mathcal{H}}\,\, \|(\bar{u}_2,\sigma_2\nabla \bar{u}_2\cdot \nu)\|_{\mathcal{H}}.
\end{equation}

\noindent We introduce the asymptotic estimates for the gradient of the Green function $G$ that will be used in the proof of Theorem \ref{theorem}.

\begin{proposition}\label{prop1}
Let $\Omega$, $D$ be, respectively, a bounded domain satisfying \eqref{omegabegin}-\eqref{omegaend} and an inclusion satisfying \eqref{inclusionbegin}-\eqref{inclusionend}. Then there exists a positive constant $C_1$ that depends on the a priori data only such that
\begin{align}
|\nabla_x G(x,y)| &\leq C_1 |x-y|^{1-n},
\end{align}
for any $x,y\in \R^3$.
\end{proposition}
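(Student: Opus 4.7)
The plan is to argue by rescaling. Fix $x,y\in \Omega_0$ with $x\neq y$ and set $r=|x-y|/2$. I would introduce the rescaled Green's function
\[
\tilde G(z):=r^{n-2}G(x+rz,y),\qquad z\in B_1,
\]
which, by a direct computation, is a distributional solution of
\[
\mbox{div}(\tilde\sigma\nabla\tilde G)+r^2\,\tilde q\,\tilde G=0\qquad\text{in }B_1,
\]
with $\tilde\sigma(z):=\sigma(x+rz)$ and $\tilde q(z):=q(x+rz)$. The target estimate will follow by bounding $|\nabla\tilde G(0)|$ uniformly and scaling back.

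First I would use Lemma \ref{lemma2} to bound $\tilde G$ in $L^{\infty}(B_1)$. For every $w\in B_r(x)$ the triangle inequality yields $|w-y|\geq |x-y|-r=r$, so that $|G(w,y)|\leq C r^{2-n}$ and therefore $\|\tilde G\|_{L^{\infty}(B_1)}\leq C$ with $C$ depending on the a priori data only. Note also that $\|r^2\tilde q\,\tilde G\|_{L^{\infty}(B_1)}\leq \bar\gamma\,r^2\,C$, which stays bounded as $r$ varies within the a priori geometry.

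Next I would invoke an interior gradient bound for transmission problems. The rescaled coefficient $\tilde\sigma$ inherits from $\sigma$ the uniform ellipticity constant $\bar\lambda$ and, thanks to the structural assumption \eqref{condbegin}, is Lipschitz continuous on each side of the rescaled interface $\tfrac1r(\p D-x)\cap B_1$; this interface remains $C^2$ with constants controlled (indeed improved) by the dilation. Applying a piecewise $W^{1,\infty}$ regularity estimate for elliptic equations whose leading coefficient is Lipschitz on two sides of a $C^{2}$ hypersurface (in the spirit of Li--Nirenberg and Li--Vogelius, and compatible with the Carstea--Wang framework already invoked in the paper) yields
\[
|\nabla\tilde G(0)|\leq C'\bigl(\|\tilde G\|_{L^{\infty}(B_1)}+\|r^2\tilde q\,\tilde G\|_{L^{\infty}(B_1)}\bigr)\leq C'',
\]
with $C''$ depending only on the a priori data. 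Since $\nabla_z\tilde G(0)=r^{\,n-1}\,\nabla_x G(x,y)$, undoing the dilation gives the desired bound $|\nabla_x G(x,y)|\leq C_1|x-y|^{1-n}$.

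The main obstacle I expect is precisely the jump of $\sigma$ across $\p D$, which rules out a direct application of classical Schauder or De Giorgi--Moser interior gradient bounds. This is circumvented by exploiting the structural form \eqref{condbegin} together with the $C^{2}$-regularity of $\p D$, which places the problem in the transmission setting where piecewise $W^{1,\infty}$ regularity is available. A secondary technical point is that $x$ may lie close to $\p\Omega_0$, where a purely interior estimate does not apply; in that regime one would replace the interior bound by its boundary counterpart adapted to the mixed Dirichlet/complex Robin condition appearing in \eqref{greensystem2}, via a standard flattening of the boundary and extension of the coefficients.
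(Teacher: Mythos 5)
Your argument is correct and is essentially the standard proof of this estimate: the paper itself offers no argument here, deferring entirely to \cite[Proposition 3.4]{g.alessandrinim.dicristo2005}, and that reference proceeds exactly by the rescaling you describe combined with piecewise gradient bounds for the transmission problem across the $C^2$ interface. Your two flagged caveats (the gradient being understood piecewise on either side of $\p D$, and the boundary-adapted estimate when $B_{|x-y|/2}(x)$ exits $\Omega_0$) are the right ones and are handled in the cited reference in the way you indicate.
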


\begin{proof}[Proof of Proposition \ref{prop1}]
See \cite[Proposition 3.4]{g.alessandrinim.dicristo2005}.
\end{proof}

\begin{proof}[Proof of Proposition \ref{prop-bis}.]
Fix a point $\bar{y}\in D_0$ such that $dist(\bar{y},\p\Omega)\geq \tilde cr_0$, for $0<\tilde c<1$ suitable constant. For any $\bar{w}\in \R^n\setminus\bar{\Omega}_D$, $f(\bar{y},\cdot)$ is a weak solution to
\begin{equation}\label{eqA}
\mbox{div}_w\left(a_b(\cdot)A(\cdot)\nabla_w f(\bar{y},\cdot)\right) +q_b(\cdot) f(\bar{y},\cdot) = 0 \qquad \text{in }\R^n\setminus\bar{\Omega}_D.
\end{equation}
First, choose $\bar{w}\in D_0$ such that $dist(\bar{w},\p\Omega)\geq \tilde cr_0$, for $0<\tilde c<1$. By \eqref{claim3}, 
\[
|f(\bar{y},\bar{w})| \leq d(\bar{\mathcal{C}}_1,\bar{\mathcal{C}}_2)\,\, \|(G_1(\cdot,\bar{y}),\sigma_1\nabla G_1(\cdot,\bar{y})\cdot\nu)\|_{\mathcal{H}}\,\, \|(G_2(\cdot,\bar{w}),\sigma_2\nabla \bar{G}_2(\cdot,\bar{w})\cdot \nu)\|_{\mathcal{H}}.
\]
Since by Proposition \ref{prop1} and the definition of the norm on $\mathcal{H}$,
\begin{align*}
\|(G_1(\cdot,\bar{y}),\sigma_1\nabla G_1(\cdot,\bar{y})\cdot\nu)\|_{\mathcal{H}} \leq \left(\|G_1(\cdot,\bar{y})\|^2_{H^{1}(\p\Omega)} + C \|\nabla G_1(\cdot,\bar{y})\|^2_{H^{1}(\p\Omega)} \right)^{\frac{1}{2}},
\end{align*}
it follows that
\begin{equation}\label{eqB}
|f(\bar{y},\bar{w})|\leq C_1\epsilon.
\end{equation}
where $C_1$ is a positive constant depending on the a priori data only.

\noindent Now let  $\bar{w}\in \bar{\Omega}^{r_0}\setminus \bar{\Omega}_D$ where $\Omega^{r_0}=\{x\in\R^3\,:\,dist(x,\p\Omega)> r_0\}$ and let $\bar{y}\in D_0$. By Proposition \ref{prop1} and since $|x-\bar{y}|\geq r_0$,
\begin{align*}
|f(\bar{y},\bar{w})| &\leq C \sum_{j=1}^2 \int_{D_j} |x-\bar{y}|^{1-n} |x-\bar{w}|^{1-n} \diff x\\
&\leq C \sum_{j=1}^2 \int_{D_j} |x-\bar{w}|^{1-n} \diff x.
\end{align*}
Choose $\tilde R = diam(\Omega) + r_0\leq C r_0$, where $C$ is a constant which depends only on $L$. Hence, $B_{\tilde R}(\bar{w})\subset \Omega$ and for $j=1,2$,
\[
\int_{D_j} |x-\bar{w}|^{1-n} \diff x \leq \int_{B_{\tilde R}(\bar{w})} |x-\bar{w}|^{1-n} \diff x \leq C.
\]

\noindent The next step consists in the determination of an estimate for $f(\bar{y},w)$ when $w\in \de$. For $h>0$, define
$$(\G)^h = \{x\in\Omega^c_D\,:\,\text{dist}(x,\p\Omega_D)\geq h\}.$$
\noindent For $w\in (\de)^h$, by Proposition \ref{prop1},
\begin{align*}
|S_1(\bar{y},w)| \leq C \int_{D_1} |x-\bar{y}|^{1-n} |x-w|^{1-n} \diff x \leq C h^{1-n}.
\end{align*}
Similarly, $|S_2(\bar{y},w)|\leq C h^{1-n}$, so that
\begin{equation}\label{eqC}
|f(\bar{y},w)|\leq C h^{1-n}.
\end{equation}

\noindent We proceed with a quantitative estimate of propagation of smallness for the function $f$ with respect to the second variable. Let $O$ be the point of Lemma \ref{metric-lemma} and assume to be in a coordinate system in which $O$ coincides with the origin and set $y_h=h\nu(O)$. The goal is to propagate \eqref{eqB} inside $\mathcal{G}$ up to $y_h$. 


\noindent In order to do it, fix $\bar{y}, w\in D_0$ such that $dist(\bar{y},\p\Omega)\geq r_0$ and $dist(w,\p\Omega)\geq r_0$. By Lemma \ref{metric-lemma} we know that there exists a curve $\gamma\subset (\overline{\Omega^{r_0}}\cup D_0)\setminus \bar{\Omega}_D$ joining $w$ to the point $Q=\bar{d} \nu(O)$, where $\nu(O)=-e_n$, such that $V(\gamma)\subset \R^3\setminus \Omega_D$ with $R=\frac{\bar{d}}{\sqrt{1+L^2}}$ and $\theta_0=\arcsin \frac{R}{\bar{d}}$.

\noindent Notice that, since $f(\bar{y},\cdot)$ is a weak solution to \eqref{eqA}, one can apply the three sphere inequality in the ball $B_{r_0}(\bar{x})$, where the point $\bar{x}\in D_0$ is such that $dist(\bar{x},\p\Omega)=\frac{3r_0}{4}$. Choose $r=\frac{r_0}{4}$, then, for radii $r, 3r, 4r$, the following estimate holds,
\begin{equation}\label{eqD}
\|f(\bar{y},\cdot)\|_{L^{\infty}(B_{3r}(\bar{x}))}\leq C \|f(\bar{y},\cdot)\|_{L^{\infty}(B_{r}(\bar{x}))}^{\tau} \|f(\bar{y},\cdot)\|_{L^{\infty}(B_{4r}(\bar{x}))}^{1-\tau}
\end{equation}
where
\[
\tau = \frac{\ln \frac{4\bar{\lambda}}{3}}{\ln \frac{4\bar{\lambda}}{3} + c \ln \frac{3}{\bar{\lambda}}},
\]
 $0<\tau<1$ and $C>0$ depends on $\bar{\lambda}, L, r_0$.

\noindent Consider $w, Q$ and the curve $\gamma$ as above. We select a finite number of points on $\gamma$ as follows. Set $\phi_1=w$. Then
\begin{enumerate}
\item if $|\phi_{j-1}-Q|>r$, then set $\phi_j=\gamma(t_j)$ where $t_j=\max\{t: |\gamma(t)-\phi_{j-1}|=r\}$;
\item otherwise, set $s=j$, $\phi_s=Q$ and stop the process.
\end{enumerate}

\noindent By iterating the three sphere inequality along the chain of balls centred at $\phi_j$ for $j=1,\dots,s$, and assuming that $s\leq S$ where $S$ depends on $n$ only, one derives that for any $r_1$ with $0<r_1<r$, 
\[
\|f(\bar{y},\cdot)\|_{L^{\infty}(B_{\frac{r_1}{2}}(Q))}\leq C \|f(\bar{y},\cdot)\|_{L^{\infty}(B_{\frac{r_1}{2}}(\bar{w}))}^{\tau^s} \|f(\bar{y},\cdot)\|_{L^{\infty}(\mathcal{G})}^{1-\tau^s}
\]
By \eqref{eqB} and \eqref{eqC},
\begin{equation}\label{eqF}
\|f(\bar{y},\cdot)\|_{L^{\infty}(B_{\frac{r_1}{2}}(Q))} \leq C \epsilon^{\tau^S} (h^{1-n})^{1-\tau^S}.
\end{equation}
The goal is to propagate the smallness from $Q$ to $y_h$. Consider the truncated cone $C(O, -e_n, d, \theta_0)$ where $d=\frac{\bar{d}^2-R^2}{\bar{d}}$. Define
\[
\lambda_1 = \min\left\{ \frac{d}{1+\sin\theta_0}, \frac{d}{3\sin\theta_0}\right\}, \quad \tilde\theta_0=\arcsin\left(\frac{\sin\theta_0}{8} \right)
\]
\[
w_1=O+\lambda_1\nu(O),\quad \rho_1=\lambda_1\sin\theta_1,\quad a=\frac{1-\sin\theta_1}{1+\sin\theta_1},
\]
so that $B_{\rho_1}(w_1)\subset C(O,\nu(O), \theta_1, d)$ and $B_{4\rho_1}(w_1)\subset C(O,\nu(O),\tilde\theta_0,d)$. Since $\rho_1<\frac{r_0}{2}$, one can apply \eqref{eqF} in the cone $C(O,-e_n,\theta_1,d)$ over a chain of balls of shrinking radii $\rho_k=a\rho_{k-1}$ centred at points $w_k=O+\lambda_k\nu(O)$ with $\lambda_k=a\lambda_{k-1}$. Denote by $d(k)=|w_k-O|-\rho_k$, then $d(k)=a^{k-1}d(1)$. We consider $h\leq d(1)$ and define a natural number $k(h)$ as the smallest positive integer such that $d(k(h))\leq h$. Hence
\[
\frac{\left|\ln\frac{h}{d(1)}\right|}{|\ln a|} \leq k(h)-1 \leq \frac{\left|\ln\frac{h}{d(1)}\right|}{|\ln a|} +1.
\]
By iterating the three-sphere inequality over the chain of balls $B_{\rho_1}(w_1),\dots,B_{\rho_{k(h)}}(w_{k(h)})$, one derives
\begin{equation}\label{eqG}
\|f(\bar{y},\cdot)\|_{L^{\infty}\left(B_{\rho_{k(h)}}(w_{k(h)})\right)} \leq c (h^{1-n})^{A''} \epsilon^{\beta\tau^{k(h)-1}},
\end{equation}
where $\beta=\tau^S$ and $A''=1-\beta$.

\noindent Consider now $f(y,w)$ as a function of $y$. Notice that for any $w\in \R^n\setminus \Omega_D$, $f(\cdot,w)$ is a weak solution to
\[
div_y(a_b(\cdot)A(\cdot)\nabla_y f(\cdot,w)) + q_b(\cdot)f(\cdot,w)=0\qquad \text{in }\R^n\setminus\Omega_D.
\]
For any $y,w\in \mathcal{G}^h$, by Proposition \eqref{prop1},
\[
|S_1(y,w)|\leq c \int_{D_1} |x-y|^{1-n} |x-z|^{1-n} \diff x \leq c h^{2(1-n)}.
\]
Similarly, $|S_2(y,w)|\leq c h^{2(1-n)}$, so that
\[
|f(y,w)|\leq c h^{2(1-n)}. \qquad \text{for any }y,w \in \mathcal{G}^h.
\]
\noindent Now, for $y\in D_0$ such that $dist(y,\p\Omega)\geq \tilde c r_0$, for $w\in \mathcal{G}^h$ and by \eqref{eqG},
\[
|f(y,w)|\leq c (h^{1-n})^{A''} \epsilon^{\beta\tau^{k(h)-1}},
\]
where $A'', \beta$ are defined as above. Fix $w\in \mathcal{G}$ such that $dist(w,\Omega_D)=h$, $\bar{y}\in D_0$ such that $dist(\bar{y},\p\Omega)\geq \frac{3r_0}{2}$, then for $\bar{r}=\frac{r_0}{2}$, $3\bar{r}$, $4\bar{r}$ and $y_1=w_1$ defined as above, by an iterated application of the three sphere inequality one derives
\begin{align*}
\|f(\cdot,w)\|_{L^{\infty}\left(B_{\bar{r}}(y_1)\right)} &\leq c \|f(\cdot,w)\|_{L^{\infty}(B_{\bar{r}}(\bar{y}))}^{\tau^s} \|f(\cdot,w)\|_{L^{\infty}(\mathcal{G})}^{1-\tau^s}\\
&\leq c (h^{2-2n})^{A''} \epsilon^{\beta^2\tau^{k(h)-1}},
\end{align*}
where $A'=1-\beta+A''\tau^s$, $\beta=\tau^S$. Once more we apply the three sphere inequality inside the cone of vertex $O$ over a chain of balls with shrinking radii as above so that
\[
\|f(\cdot,w)\|_{L^{\infty}\left(B_{\rho_{k(h)}}(y_{k(h)})\right)} \leq c (h^{A'})^{1-\tau^{k(h)-1}} (\epsilon^{\beta^2\tau^{k(h)-1}})^{\tau^{k(h)-1}}.
\]
Now, if we choose $y=w=y_h$, one derives
\[
|f(y_h,y_h)| \leq c h^{-A} (\epsilon^{\beta^2\tau^{k(h)-1}})^{\tau^{k(h)-1}}.
\]
where $A=-(2-2n)A'(1-\tau^{k(h)-1})>0$. Since $k(h)\leq c|\ln h|=-c\ln h$, then
\[
\tau^{k(h)} = e^{-c\ln h \ln \tau} = h^{-c\ln\tau} = h^F, \qquad \text{where } F=c|\ln h|.
\]
In conclusion,
\begin{align*}
|f(y,y)| \leq c_1 h^{-A} \epsilon^{\beta^2\tau^{2(k(h)-1)}} = c_1 h^{-A} e^{\beta^2\tau^{2(k(h)-1)}\ln\epsilon} = c_1 h^{-A}\epsilon^{Bh^F},
\end{align*}
where $B=\beta^2$.
\end{proof}

\subsection{Lower bound for the function $f$}

In view of the proof Proposition \ref{prop5}, we introduce the asymptotic estimates for the Green's functions which are solutions of \eqref{greensystem} with respect to two auxiliary families of Green's functions.
 
\noindent First, let $P\in \p D_1\cap \p \Omega_D$ be the point of Lemma \ref{metric-lemma}.  Up to a rigid transformation, we can assume that $P$ coincides with the origin $O$ and
\[
D\cap Q_{\frac{r_0}{3}} = \left\{ x\in Q_{\frac{r_0}{3}}\,: \,x_n \geq \varphi(x')\right\},
\]
where $\varphi\in C^2(B'_{\frac{r_0}{3}})$.

\noindent Following the lines of \cite[Theorem 4.2]{g.alessandrinis.vessella2005}, we introduce a change of coordinates which flattens the boundary near $O$. Let $\tau\in C^{\infty}(\R)$ such that $0\leq \tau(s)\leq 1$, $\tau(s)=1$ for $s\in (-1,1)$ and $\tau(s)=0$ for $s\in \R\setminus(-2,2)$ and $|\tau'(s)|\leq 2$ for any $s\in \R$. Set
\[
r_1=\frac{r_0}{3}\min\left\{\frac{1}{2}(8L)^{-1}, \frac{1}{4}\right\}.
\]
The following change of coordinates 
\begin{align*}
\xi=\phi(x)=
\begin{cases}
\xi'=x'\\
\xi_n=x_n-\varphi(x')\tau\left(\frac{|x'|}{r_1}\right)\tau\left(\frac{x_n}{r_1} \right),
\end{cases}
\end{align*}
is  a $C^{1,1}$ diffeomorphism of $\R^n$ into itself and allows us to flatten locally the boundary of the inclusion. In what follows, we keep the notation with $x$, since the exponent appearing in the asymptotic estimates does not depend on the change of coordinates.

\noindent Set
\begin{equation}\label{sigma0}
\sigma_0(x)=\left(a^- + (a^+ - a^-)\chi^+\right)A \quad \text{and}\quad q_0(x)=q_b(0)+\left(q_{D_i}(0) - q_b(0)\right)\chi^+(x)
\end{equation}
where 
$$a^-=a_b(0), \quad a^+=a_D(0), \quad A=A(0), \quad \chi^+=\chi_{\R^n_+}.$$
For $y\in D_0$, let $G_0(\cdot,y)$ be the weak solution to
\begin{equation*}
\begin{cases}
\text{div}(\sigma_0(x)\nabla G_0(x,y)) + q_0(x) G_0(x,y)=-\delta(x-y), &\text{for }x\in\Omega_0,\\
G_0(x,y)=0, &\text{for }x\in \p\Omega_0\setminus\Sigma_0,\\
\sigma_0(x)\nabla G_0(x,y)\cdot\nu(x) + iG_0(x,y)=0, &\text{for }x\in \Sigma_0.
\end{cases}
\end{equation*}
Let $H$ be the fundamental solution to
$$\text{div}\left(\sigma_0(\cdot)\nabla H(\cdot,y)\right)=-\delta(\cdot-y).$$
Recalling \cite{s.foschiattie.sincichr.gaburro2021}, $H$ has the following expression

\begin{equation}\label{fundsol}
H(x,y) = |J|
\begin{cases}
\displaystyle \frac{1}{a^+}\Gamma(Jx,Jy) + \frac{a^+-a^-}{a^+(a^++a^-)}\Gamma(Jx,Jy^*) &\text{if }x_n, y_n >0,\\
\displaystyle \frac{2}{a^-+a^+} \Gamma(Jx,Jy) &\text{if } x_n\cdot y_n<0,\\
\displaystyle \frac{1}{a^-}\Gamma(Jx,Jy) + \frac{a^--a^+}{a^-(a^++a^-)}\Gamma(Jx,Jy^*) &\text{if }x_n, y_n <0,
\end{cases}
\end{equation}
where $y^* = (y_1,\dots,y_{n-1},-y_n)$, $J=\sqrt{A(0)^{-1}}$ and $|J|=\text{det}(\sqrt{A(0)^{-1}})$.

\noindent We introduce the asymptotic estimates for the Green's functions $G$ with respect to $H$.
\begin{proposition}\label{prop2}
Under the same assumptions as in Proposition \ref{prop1}, there exists positive constants $C_2, C_3$ and $\theta_1\in(0,1)$ that depend on the a priori data only such that
\begin{align}
|G(x,y) - H(x,y)| &\leq C_2 |x-y|^{3-n},\\
|\nabla_x G(x,y) - \nabla_x H(x,y)| &\leq C_3 |x-y|^{1-n+\theta_1},
\end{align}
for every $x\in D\cap B_r$ and $y=h\nu(O)$ where $r\in \left(0,\frac{r_0}{2}\min\{(8L)^{-1},\frac{1}{4}\}\right)$ and $h\in \left(0,\frac{r_0}{4}\min\{(8L)^{-1},\frac{1}{4}\}\right)$.
\end{proposition}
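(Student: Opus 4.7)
The plan is to reduce the asymptotic estimate to an integral representation for the difference $R(x,y) := G(x,y) - H(x,y)$ and then bound the resulting kernels using the Lipschitz smallness of the coefficient perturbation and standard elliptic regularity. I work in the flattened coordinates already introduced above, in which $D \cap Q_{r_1}$ coincides with $\{x_n > 0\} \cap Q_{r_1}$, so that both $\sigma$ and $\sigma_0$ share the same jump set $\{x_n = 0\}$.

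\textbf{Step 1 (PDE for the remainder).} From the defining problems for $G(\cdot,y)$ and $H(\cdot,y)$ one gets, in $\Omega_0$,
\begin{equation*}
\mathrm{div}(\sigma \nabla R) + q R \;=\; -\mathrm{div}\bigl((\sigma - \sigma_0)\nabla H\bigr) \;-\; q\, H,
\end{equation*}
with homogeneous boundary conditions of mixed Dirichlet / complex Robin type inherited from $G$ and $H$ on $\partial\Omega_0$ (the constructions of $G$ and $H$ are arranged so that the boundary data of $R$ contribute only lower-order terms, controlled as in the proof of Lemma~\ref{lemma2}).

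\textbf{Step 2 (Representation formula).} Using $G(x,\cdot)$ as the Green's function of the operator $\mathrm{div}(\sigma\nabla\cdot)+q$, test the PDE for $R$ against $G(x,\cdot)$ and integrate by parts to obtain
\begin{equation*}
R(x,y) \;=\; \int_{\Omega_0} \nabla_z G(x,z)\cdot(\sigma(z)-\sigma_0(z))\nabla_z H(z,y)\,\diff z \;-\; \int_{\Omega_0} q(z)\, G(x,z)\, H(z,y)\,\diff z + \mathcal{B}(x,y),
\end{equation*}
where $\mathcal{B}(x,y)$ gathers boundary contributions and is smooth and bounded uniformly in $x,y$ sufficiently close to the origin (since both poles stay away from $\partial\Omega_0$).

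\textbf{Step 3 (Pointwise estimates on the kernels).} By Lemma~\ref{lemma2} and Proposition~\ref{prop1} one has $|G(x,z)| \leq C|x-z|^{2-n}$ and $|\nabla_z G(x,z)| \leq C|x-z|^{1-n}$; the same bounds hold for $H$ from the explicit formula \eqref{fundsol}. The crucial smallness enters through
\begin{equation*}
|\sigma(z)-\sigma_0(z)| \;\leq\; C|z| \quad\text{for a.e. } z \text{ with } z_n \neq 0,
\end{equation*}
because, after flattening, on each half-space $a_b, a_D, A$ are Lipschitz and agree with the frozen values at $0$; the jumps across $\{z_n=0\}$ cancel exactly against those of $\sigma_0$. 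Inserting these bounds,
\begin{equation*}
|R(x,y)| \;\leq\; C\int_{\Omega_0} |x-z|^{1-n}\,|z|\,|z-y|^{1-n}\,\diff z \;+\; C\int_{\Omega_0} |x-z|^{2-n}\,|z-y|^{2-n}\,\diff z \;+\; O(1).
\end{equation*}
Standard computations for Riesz-type convolutions, using $|z| \leq |z-y|+|y|$ and the fact that the two poles $x,y$ lie in a ball of radius $\sim |x-y|$ around $O$, bound both integrals by $C|x-y|^{3-n}$, which proves the first estimate.

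\textbf{Step 4 (Gradient estimate).} To pass from the $L^\infty$ bound on $R$ to a pointwise bound on $|\nabla_x R|$ with an extra Hölder exponent, apply the interior Hölder gradient estimate to $R(\cdot,y)$ in a ball $B_\rho(x)$ with $\rho \sim |x-y|/4$ that avoids the pole $y$ but may intersect the interface $\{x_n=0\}$. Since the leading coefficient $\sigma$ is Lipschitz on each side of the $C^2$ interface, the transmission-type regularity results (in the spirit of Li--Vogelius and used in the propagation-of-smallness estimate \cite{c.i.carsteaj.n.wang2020} already invoked in Lemma~\ref{lemma2}) yield the existence of $\theta_1 \in (0,1)$ depending only on the ellipticity and Lipschitz data such that
\begin{equation*}
\rho\,\|\nabla_x R(\cdot,y)\|_{L^\infty(B_{\rho/2}(x))} \;\leq\; C\Bigl(\|R(\cdot,y)\|_{L^\infty(B_\rho(x))} + \rho^{1+\theta_1}\|\mathrm{RHS}\|_{*}\Bigr),
\end{equation*}
where the norm on the right is taken in a Morrey-type space tailored to the transmission setting. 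Plugging in the bound $|R| \leq C|x-y|^{3-n}$ from Step~3 and the scaling of the right-hand side produces the claimed estimate
\begin{equation*}
|\nabla_x G(x,y) - \nabla_x H(x,y)| \;\leq\; C_3\, |x-y|^{1-n+\theta_1}.
\end{equation*}

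\textbf{Main obstacle.} The delicate point is Step~4: one cannot invoke classical $C^{1,\alpha}$ interior estimates since $\sigma$ jumps across $\{x_n=0\}$. The positive exponent $\theta_1$ must come from the piecewise-Lipschitz transmission regularity available for divergence-form operators with $C^2$ interface, and the constants must depend only on the \emph{a priori} data. Equally, in Step~3 one must be careful with the localization of the flattening diffeomorphism: the representation formula is global in $\Omega_0$, but the Lipschitz bound $|\sigma-\sigma_0|\leq C|z|$ is valid only in the cylinder where the change of variables acts as the identity in a neighbourhood of $O$; outside that cylinder the integrands are harmless because $|x-z|$ and $|z-y|$ are bounded below by $r_1$.
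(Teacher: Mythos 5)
Your argument is essentially sound and reaches the right estimates, but it is organized differently from the paper's proof, and one step deserves comment. The paper does not estimate $G-H$ in one shot: it inserts the intermediate Green's function $G_0$ for the frozen-coefficient operator $\mathrm{div}(\sigma_0\nabla\cdot)+q_0$ with the \emph{same} mixed Dirichlet/Robin boundary conditions, and proves two separate claims, $|G-G_0|\leq C|x-y|^{3-n}$ (where the smallness $|\sigma-\sigma_0|\leq C|z|$ enters, exactly as in your Step 3) and $|G_0-H|\leq C|x-y|^{4-n}$ (where only the zero-order term and the boundary mismatch of $H$ enter). Your single representation formula for $R=G-H$ effectively adds these two identities together, and your kernel bounds in Step 3 are the same Riesz-composition computations the paper borrows from Di Cristo; the boundary terms are harmless for the same reason in both versions ($x,y$ stay at distance $\geq\delta_0$ from $\partial\Omega_0$). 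The genuine divergence is Step 4. The paper does not invoke a scaled transmission Schauder estimate for the inhomogeneous equation; instead it uses Alessandrini's interpolation inequality
\begin{equation*}
\|\nabla R(\cdot,y)\|_{L^{\infty}(Q)}\;\leq\;C\,\|R(\cdot,y)\|_{L^{\infty}(Q)}^{1/2}\,|\nabla R(\cdot,y)|_{1,Q}^{1/2},
\end{equation*}
on a cube $Q$ on one side of the interface with $x\in\partial Q$, together with the piecewise H\"older bounds $|\nabla G(\cdot,y)|_{1,Q},|\nabla H(\cdot,y)|_{1,Q}\leq Ch^{-n}$ from Ladyzhenskaya--Ural'tseva; combined with $\|R\|_{L^\infty}\leq C|x-y|^{3-n}$ this yields the exponent explicitly, $\theta_1=\tfrac12$. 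Your version of Step 4 asserts an estimate in an unspecified Morrey-type norm and never carries out the bookkeeping that produces $\theta_1$; while Li--Vogelius-type piecewise $C^{1,\alpha}$ regularity would indeed deliver such a bound, as written this is the one under-justified link in your chain, and the interpolation device is the cheaper way to close it with constants depending only on the a priori data.
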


\begin{proof}[Proof of Proposition \ref{prop2}]
Notice that for $x, y$ as in the assumptions,
\[
|G(x,y) - H(x,y)| \leq |G(x,y)-G_0(x,y)| + |G_0(x,y)-H(x,y)|.
\]
Hence we can split the proof of Proposition \ref{prop2} into two claims.

\begin{claim}\label{propstar}
There exists positive constants $C_4, C_5$ and $\theta_1\in(0,1)$ that depend on the a priori data only such that
\begin{align}
|G(x,y) - G_0(x,y)| &\leq C_4 |x-y|^{3-n},\\
|\nabla_x G(x,y) - \nabla_x G_0(x,y)| &\leq C_5 |x-y|^{1-n+\theta_1},
\end{align}
for every $x\in D\cap B_r$ and $y=h\nu(O)$ where $r\in \left(0,\frac{r_0}{2}\min\{(8L)^{-1},\frac{1}{4}\}\right)$ and $h\in \left(0,\frac{r_0}{4}\min\{(8L)^{-1},\frac{1}{4}\}\right)$.
\end{claim}

\begin{proof}[Proof of Claim \ref{propstar}]
We follow the lines of the proof of \cite[Proposition 3.4]{g.alessandrinim.dicristo2005}. For simplicity, we consider a generic inclusion $D$ with jump coefficients $\sigma$, $q$. Fix $P\in\p D$ so that under a suitable transformation of coordinates, $P=O$. Let $G$ denote the Green function associated with the elliptic operator div$(\sigma(\cdot)\nabla\cdot)+q(\cdot)$ so that for any $y\in \Omega_0$, $G(\cdot,y)$ is a distributional solution to the following boundary value problem
\begin{equation}\label{system1}
\begin{cases}
\text{div}(\sigma(x)\nabla G(x,y)) + q(x)G(x,y)=-\delta(x-y), &\text{for }x\in \Omega_0,\\
G(x,y)=0, &\text{for }x\in\p\Omega_0\setminus\Sigma_0,\\
\sigma(x)\nabla G(x,y)\cdot\nu(x) + iG(x,y)=0, &\text{for }x\in \Sigma_0.
\end{cases}
\end{equation}
For $O\in \p D$, let $\sigma_0, q_0$ be as in \eqref{sigma0}. For $y\in \Omega_0$, let $G_0(\cdot,y)$ be the Green's function which is a distributional solution to the following auxiliary boundary value problem
\begin{equation}\label{system2}
\begin{cases}
\text{div}(\sigma_0(x)\nabla G_0(x,y)) + q_0(x) G_0(x,y)=-\delta(x-y), &\text{for }x\in\Omega_0,\\
G_0(x,y)=0, &\text{for }x\in \p\Omega_0\setminus\Sigma_0,\\
\sigma_0(x)\nabla G_0(x,y)\cdot\nu(x) + iG_0(x,y)=0, &\text{for }x\in \Sigma_0.
\end{cases}
\end{equation}
Define 
\begin{equation}
R(x,y) = G(x,y) - G_0(x,y).
\end{equation}
Subtracting the first equation of \eqref{system2} to \eqref{system1}, it follows that $R(x,y)$ is a weak solution in $\Omega_0$ to the equation
\begin{equation}
\text{div}(\sigma(\cdot)\nabla R(\cdot,y)) + q(\cdot) R(\cdot,y) = - \text{div}\left((\sigma(\cdot)-\sigma_0(\cdot))\nabla G_0(\cdot,y)\right)  + [q_0(\cdot)-q(\cdot)] G_0(\cdot,y), \quad \text{in }\Omega_0
\end{equation}
with boundary conditions
\begin{align*}
\left\{
\begin{array}{ll}
R(x,y) = 0 &\text{for }x\in \p\Omega_0\setminus \Sigma_0,\\
\displaystyle \sigma_0(x)\nabla_x R(x,y) \cdot\nu(x) + i R(x,y) = (\sigma_0(x)-\sigma(x)) G(x,y) &\text{for }x\in \Sigma_0.
\end{array}
\right.
\end{align*}

\noindent Then the following representation formula holds
\begin{align}\label{restoR}
-R(x,y) &= \int_{\Omega_0} \big(\sigma(z) - \sigma_0(z)\big) \nabla_z G(z,x)\cdot \nabla_z G_0(z,y) \diff z +\notag\\
&+ \int_{\Omega_0} \big(q(z) - q_0(z)\big) G(z,x) G_0(z,y) \diff z\notag\\
&+\int_{\Sigma_0} \left[\sigma_0(z)\nabla_z G_0(z,y)\cdot\nu G(z,x) - \sigma(z)\nabla_z G(z,x)\cdot \nu G_0(z,y)\right]\diff S(y)
\end{align}
The boundary integral are bounded (for instance by the Schwarz inequality and the trace estimates). The second volume integral in \eqref{restoR} is less singular than the first volume integral, so that we find convenient to study the first volume integral. Let us split the domain of integration into the union of the subdomains $\Omega\cap Q_{\bar{r}_0}$ and $\Omega\setminus Q_{\bar{r}_0}$. For $x\in \Omega\cap Q_{\bar{r}_0}$, 
\[
|\sigma(z)-\sigma_0(z)|\leq C|z|.
\]
Then we can apply the same argument of \cite[Proposition 4.1]{m.dicristo2009} and conclude that 
\begin{equation}\label{Restimate}
|R(x,y)|\leq C_4 |x-y|^{3-n},
\end{equation}
where $\tilde{C}_1$ is a positive constant which depends only on the a priori data. 

\noindent Regarding the estimate for the gradient of the residual $R$, recalling that the boundary of $D$ is $C^2$ and hence $C^{1,1}$, for $x\in D\subset B_r$, we consider a cube $Q\subset B^+_{\frac{r}{4}}$ of side $\frac{c\bar{r}_0}{4}$, $c\in (0,1)$ so that $y\notin Q$ and $x\in \p Q$. By \cite[Lemma 3.2]{g.alessandrini1990}, the following interpolation formula holds
\begin{equation}\label{interpolationA}
\|\nabla R(\cdot,y)\|_{L^{\infty}(Q)} \leq C \|R(\cdot,y)\|_{L^{\infty}(Q)}^{\frac{1}{2}} |\nabla_x R(\cdot,y)|^{\frac{1}{2}}_{1,Q},
\end{equation}
where $C$ depends on $L$ only. For $y=h\nu(O)$ for $h$ as in the Proposition statement, from the piecewise H\"older continuity of $\nabla_x G(x,y)$ and $\nabla_x G_0(x,y)$ (see \cite[Theorem 16.2]{o.ladyzhenskayan.uraltseva1968}),
\[
|\nabla_x G(\cdot,y)|_{1,Q}, |\nabla_x G_0(\cdot,y)|_{1,Q} \leq C h^{-n}.
\]
Therefore,
\begin{equation}\label{Rsemiestimate}
|\nabla_x R(\cdot, y)|_{1,Q} \leq C h^{-n}
\end{equation}
and collecting \eqref{Restimate}, \eqref{interpolationA} and \eqref{Rsemiestimate}, it follows that
\[
|\nabla_x R(x,y)|\leq C_5 |x-y|^{1-n+\theta_1},
\]
where $\theta_1=\frac{1}{2}$.

\end{proof}

\begin{claim}\label{prop3}
There exists positive constants $C_6, C_7$ that depend on the a priori data only such that
\begin{align}
|G_0(x,y) - H(x,y)| &\leq C_6 |x-y|^{4-n},\\
|\nabla_x G_0(x,y) - \nabla_x H(x,y)| &\leq C_7 |x-y|^{2-n},
\end{align}
for every $x\in D\cap B_r$ and $y=h\nu(O)$ where $r\in \left(0,\frac{r_0}{2}\min\{(8L)^{-1},\frac{1}{4}\}\right)$ and $h\in \left(0,\frac{r_0}{4}\min\{(8L)^{-1},\frac{1}{4}\}\right)$.
\end{claim}

\begin{proof}[proof of Claim \ref{prop3}]
We follow the argument in \cite[Proposition 4.2]{m.dicristo2009}. Let $y,z\in \Omega_0$ and let $D$ be an inclusion in $\Omega$ satisfying \eqref{inclusionbegin}-\eqref{inclusionend}. Recall that $G_0(\cdot,y)$ is the weak solution to 
\begin{equation*}
\begin{cases}
\text{div}(\sigma_0(x)\nabla G_0(x,y)) + q_0(x) G_0(x,y)=-\delta(x-y), &\text{for }x\in\Omega_0,\\
G_0(x,y)=0, &\text{for }x\in \p\Omega_0\setminus\Sigma_0,\\
\sigma_0(x)\nabla G_0(x,y)\cdot\nu(x) + iG_0(x,y)=0, &\text{for }x\in \Sigma_0.
\end{cases}
\end{equation*}
and $H(\cdot,y)$ is the fundamental solution to
\begin{equation}
\text{div}_x (\sigma_0(\cdot)\nabla_x H(\cdot,y)) = -\delta(\cdot-y) \qquad \text{in }\R^n.
\end{equation}
The residual function 
\[
R(x,y)=G_0(x,y)-H(x,y)
\]
is a weak solution to the equation
\[
\begin{cases}
\text{div}_x (\sigma_0(\cdot)\nabla_x R(\cdot,y)) = - q_0(\cdot) G_0(\cdot,y)  &\text{in }\Omega_0\\
R(\cdot,y)=-H(\cdot,y) &\text{on }\p\Omega_0\setminus \Sigma_0\\
\sigma_0(\cdot)\nabla R(\cdot,y)\cdot \nu(\cdot) + i R(\cdot,y)= -\sigma_0(\cdot)\nabla H(\cdot,y)\cdot \nu(\cdot) - i H(\cdot,y) &\text{on }\Sigma_0.
\end{cases}
\]
Its representation formula is 
\begin{align}
-R(x,y) = & \int_{\Omega_0} q_0(z) G_0(z,x) H(z,y) \diff x +\notag\\
&+\int_{\p\Omega_0} \sigma_0(z)\left[\nabla_z H(z,x)\cdot \nu H(z,y) - \nabla_z G_0(z,x)\cdot\nu H(z,y) \right] \diff S(z)+\notag\\
&+\int_{\p\Omega_0} \left[\nabla_z H(z,y)\cdot\nu G_0(z,x) - \nabla_z H(z,y)\cdot\nu H(z,x)\right]\diff S(z).
\end{align}
The surface integral can be easily bounded from above using Cauchy-Schwarz inequality by
 a constant that depends on the a priori data only. Regarding the volume integral, by \eqref{boundgreen} it follows that
\begin{align*}
\Big|\int_{\Omega} q_0(z) G_0(z,x) H(z,y) \diff x \Big| &\leq \|q_0\|_{L^{\infty}(\Omega)} \int_{\Omega} |G_0(z,x)| |H(z,y)| \diff z\\
&\leq C \int_{\Omega} |z-x|^{2-n} |z-y|^{2-n} \diff z.
\end{align*}
Set $\tilde r=|x-y|$ and let $N\in \N$ be such that $B_{\frac{\tilde r}{N}}(x)\cap B_{\frac{\tilde r}{N}}(y)=\emptyset$. Let $\mathcal{O}=\Omega\setminus (B_{\frac{\tilde r}{N}}(x)\cup B_{\frac{\tilde r}{N}}(y))$ and split the integral over the domain $\Omega$ as the sum of three integrals over the subdomains $B_{\frac{\tilde r}{N}}(x)$, $B_{\frac{\tilde r}{N}}(y)$ and $\mathcal{O}$. Our goal is to estimate $\int_{B_{\frac{\tilde r}{N}}(y)} |z-x|^{2-n} |z-y|^{2-n} \diff z$.

\noindent For $z\in B_{\frac{\tilde r}{N}}(y)$, by the triangular inequality it follows that $|x-z|\geq |x-y|-|y-z|\geq \frac{|x-y|}{\tilde{c}}$ for a suitable constant $\tilde{c}$, then 
\begin{align*}
\int_{B_{\frac{\tilde r}{N}}(y)} |z-x|^{2-n} |z-y|^{2-n} \diff z \leq c |x-y|^{2-n} \int_{B_{\frac{l}{\tilde{4}}}(y)} |z-y|^{2-n} \diff z \leq c |x-y|^{3-n}.
\end{align*}
\noindent Similarly, 
\[
\int_{B_{\frac{\tilde r}{N}}(x)} |z-x|^{2-n} |z-y|^{2-n} \diff z \leq c |x-y|^{4-n}.
\]

\noindent Then, for $z\in \mathcal{O}$, since $|x-z|\geq \frac{|z-y|}{N}$, it follows that
\[
\int_{\mathcal{O}} |z-x|^{2-n} |z-y|^{2-n} \diff z \leq c \int_{\mathcal{O}} |z-y|^{4-2n} \diff z \leq c\int_{\Omega\setminus B_{\frac{\tilde r}{N}}(y)} |z-y|^{4-2n} \diff z \leq c |x-y|^{4-n},
\]
where the constants c appearing in the inequalities depend on the a priori data only. In conclusion, we have proved that
\begin{equation}\label{firstR}
|R(x,y)|\leq C_6 |x-y|^{4-n},
\end{equation}
where $C_6$ depends on the a priori data only.

\noindent The next quantity that we wish to estimate is the gradient of $R$. By a similar argument as in Proposition \ref{prop2}, we pick a cube $Q$ such that $Q\subset B^+_{\frac{r}{4}}$ of side $\frac{cr}{4}$ with $c\in (0,1)$ chosen such that $x\in \p Q$.  By \cite[Lemma 3.2]{g.alessandrini1990}, the following interpolation formula holds
\begin{equation}
\|\nabla R(\cdot,y)\|_{L^{\infty}(Q)} \leq C \|R(\cdot,y)\|_{L^{\infty}(Q)}^{\frac{1}{2}} |\nabla_x R(\cdot,y)|^{\frac{1}{2}}_{1,Q},
\end{equation}
where $C$ depends on $L$ only. Since $G_0$ and $H$ are H\"older continuous, the following estimates hold
\[
|\nabla_x G_0(\cdot,y)|_{1,Q} \leq c |x-y|^{-n} \quad \text{and} \quad |\nabla_x H(\cdot,y)|_{1,Q} \leq c |x-y|^{-n}
\]
where $c$ depends on $L$ only. By \eqref{interpolationA} and\eqref{firstR},
\[
\|\nabla_x R(\cdot,y)\|_{L^{\infty}(Q)} \leq C_7 |x-y|^{2-n},
\]
where $C_7$ depends on the a priori data only.
\end{proof}
\noindent Collecting the results obtained by the two claims, the asymptotic estimates for the Green function follow.
\end{proof}

%

\begin{proof}[Proof of Proposition \ref{prop5}]
The proof follows the lines of \cite[Proposition 3.5]{g.alessandrinim.dicristo2005} and \cite[Theorem 6.5]{g.alessandrinim.dicristoa.morassie.rosset2014}. Let $O\in\p D_1$ be the point of Lemma \ref{metric-lemma}, let $y=h\nu(O)$ where $\nu(O)$ is the outer unit normal of $D_1$ at $O$ . Recall the definition of $S_1$ as 
\begin{equation}\label{S11}
S_1(y,y) = \int_{D_1} (a_{D_1}(x) - a_0(x))A(x)\nabla_x G_1(x,y)\cdot \nabla_x G_2(x,y) \diff x-\int_{D_1} (q_{D_1}(x) - q_0(x))G_1(x,y)\: G_2(x,y) \diff x.
\end{equation}
We can rewrite \eqref{S11} as follows:
\begin{align}\label{splitting}
S_1(y,y) = &\int_{D_1} (a_{D_1}(x) - a_b(x))A(x)\nabla_x H_1(x,y)\cdot\nabla_x H_2(x,y) \diff x +\notag\\
&+ \int_{D_1} (a_{D_1}(x) - a_b(x))A(x)\nabla_x H_1(x,y)\cdot \nabla_x(G_2(x,y)-H_2(x,y)) \diff x +\notag\\
&+ \int_{D_1} (a_{D_1}(x) - a_b(x))A(x)\nabla_x(G_1(x,y)-H_1(x,y))\cdot\nabla_x(G_2(x,y)-H_2(x,y))\diff x + \notag\\
&+ \int_{D_1} (a_{D_1}(x) - a_b(x))A(x)\nabla_x(G_1(x,y)-H_1(x,y))\cdot\nabla_x H_2(x,y) \diff x +\notag \\
&- \int_{D_1} (q_{D_1}(x) - q_b(x)) H_1(x,y)\,H_2(x,y)\diff x -\notag\\
&- \int_{D_1} (q_{D_1}(x) - q_b(x)) H_1(x,y)\,(G_2(x,y)-H_2(x,y)) \diff x -\notag\\
&- \int_{D_1} (q_{D_1}(x) - q_b(x)) (G_1(x,y)-H_1(x,y))\,(G_2(x,y)-H_2(x,y))\diff x -\notag\\
&- \int_{D_1} (q_{D_1}(x) - q_b(x)) (G_1(x,y)-H_1(x,y))\,H_2(x,y).
\end{align}
\noindent Set $\bar{r}_2 = \min\left\{dist(O,D_2),\frac{r_0}{12\sqrt{1+L^2}}\cdot\min\{1,L\} \right\}$. Let  $r\in(0,\bar{r}_2)$. Since for $y=h\nu(O)$, we have that the first term on the righthand side of \eqref{splitting} is the leading term as $h\rightarrow 0^+$, it is convenient to represent the domain of integration as $D_1=(D_1\cap B_r)\cup (D_1\setminus B_r)$.

\noindent so that \eqref{splitting} can be rewritten as follows:
\begin{align}\label{splitting2}
S_1(y,y) = &\int_{D_1\cap B_r(O)} (a_{D_1}(x) - a_b(x))A(x)\nabla_x H_1(x,y)\cdot\nabla_x H_2(x,y) \diff x +\notag\\
&+ \int_{D_1\cap B_r(O)} (a_{D_1}(x) - a_b(x))A(x)\nabla_x H_1(x,y)\cdot \nabla_x(G_2(x,y)-H_2(x,y)) \diff x +\notag\\
&+ \int_{D_1\cap B_r(O)} (a_{D_1}(x) - a_b(x))A(x)\nabla_x(G_1(x,y)-H_1(x,y))\cdot\nabla_x(G_2(x,y)-H_2(x,y))\diff x + \notag\\
&+ \int_{D_1\cap B_r(O)} (a_{D_1}(x) - a_b(x))A(x)\nabla_x(G_1(x,y)-H_1(x,y))\cdot\nabla_x H_2(x,y) \diff x +\notag \\
&+ \int_{D_1\setminus B_r(O)} (a_{D_1}(x) - a_b(x))A(x) \nabla_x G_1(x,y)\cdot \nabla_x G_2(x,y) \diff x -\notag\\
&- \int_{D_1\cap B_r(O)} (q_{D_1}(x) - q_b(x)) H_1(x,y)\,H_2(x,y)\diff x -\notag\\
&- \int_{D_1\cap B_r(O)} (q_{D_1}(x) - q_b(x)) H_1(x,y)\,(G_2(x,y)-H_2(x,y)) \diff x -\notag\\
&- \int_{D_1\cap B_r(O)} (q_{D_1}(x) - q_b(x)) (G_1(x,y)-H_1(x,y))\,(G_2(x,y)-H_2(x,y))\diff x -\notag\\
&- \int_{D_1\cap B_r(O)} (q_{D_1}(x) - q_b(x)) (G_1(x,y)-H_1(x,y))\,H_2(x,y)-\notag\\
&- \int_{D_1\setminus B_r(O)} (q_{D_1}(x) - q_b(x))G_1(x,y)\: G_2(x,y) \diff x.
\end{align}
Set 
\begin{align}
I_1 &= \int_{D_1\cap B_r(O)} (a_{D_1}(x) - a_b(x))A(x)\nabla_x H_1(x,y)\cdot\nabla_x H_2(x,y) \diff x,\\
R_1 &= \int_{D_1\cap B_r(O)} (a_{D_1}(x) - a_b(x))A(x)\nabla_x H_1(x,y)\cdot \nabla_x(G_2(x,y)-H_2(x,y)) \diff x +\notag\\
&+ \int_{D_1\cap B_r(O)} (a_{D_1}(x) - a_b(x))A(x)\nabla_x(G_1(x,y)-H_1(x,y))\cdot\nabla_x(G_2(x,y)-H_2(x,y))\diff x,\\
R_2 &= \int_{D_1\cap B_r(O)} (a_{D_1}(x) - a_b(x))A(x)\nabla_x(G_1(x,y)-H_1(x,y))\cdot\nabla_x H_2(x,y) \diff x,\\
R_3 &= \int_{D_1\setminus B_r(O)} (a_{D_1}(x) - a_b(x))A(x) \nabla_x G_1(x,y)\cdot \nabla_x G_2(x,y) \diff x.
\end{align}
Hence,
\begin{align*}
|S_1(y,y)| \geq |I_1| - |R_1| - |R_2| - |R_3|.
\end{align*}
For the term $I_1$, one can simply notice that
\[
H_1(x,y) = \tilde c\, \Gamma(Jx,Jy),\quad\text{and}\quad H_2(x,y) = \tilde c\,\Gamma(Jx,Jy),
\]
where $J=\sqrt{A(O)^{-1}}$ and $\tilde c$ is a constant that depends only on $a^+, a^-$. Hence, by the uniform ellipticity condition and the lower bound \eqref{lineareta}, 
\[
|I_1| \geq c \int_{D_1\cap B_r(O)} |x-y|^{2-2n} \diff x \geq c r^{2-n}\geq c h^{2-n}
\]
Regarding the term $R_2$, by Proposition \ref{prop2} we know that 
\[
|\nabla_x G_1(x,y) - \nabla_x H_1(x,y)|\leq C |x-y|^{1-n+\theta_2},
\]
so 
\begin{align*}
|R_2| \leq \tilde c \int_{D_1\cap B_r(O)}  |x-y|^{2-2n+\theta_2} \diff x,
\end{align*}
so that
\begin{align*}
|R_2| \leq c h^{2-n+\theta_2}.
\end{align*}
\noindent The term $R_3$ can be bounded in terms of a constant depending on the a priori data only, since $x\neq y$.

\noindent It remains to estimate the term $R_2$. One of the issues is that, by our choice of $r$, there are no asymptotic estimates for the term $\nabla_x(G_2(x,y)-H_2(x,y))$, but we can solve this problem by applying the following trick. Recalling Lemma \ref{lemma2}, one has that $G_2$ has the form
\[
G_2(x,y) = \tilde G_2(x,y) + \sum_{j=1}^{J+1} R_j(x,y),
\]
where $\tilde G_2$ is a weak solution to
\begin{equation}\label{greensystem3}
\left\{ \begin{array}{ll}
\mbox{div}(\sigma_2(\cdot)\nabla \tilde{G}_2(\cdot,y)) = - \delta(\cdot-y) &\textrm{$\textnormal{in}\quad \Omega_0$},\\
\tilde{G}_2(\cdot,y)=0 &\textrm{$\textnormal{on}\quad\p\Omega_0\setminus \Sigma_0$},\\
\sigma_2(\cdot) \nabla \tilde{G}_2(\cdot,y)\cdot \nu(\cdot) + i \tilde{G}_2(\cdot,y)=0 &\textrm{$\textnormal{on}\quad\Sigma_0$}.
\end{array}
\right.
\end{equation}
Hence,
\[
|\nabla_x(G_2(x,y)-H_2(x,y))|\leq |\nabla_x(\tilde G_2(x,y)-H_2(x,y))| + \sum_{j=1}^{J+1} |\nabla_x R_j(x,y)|.
\]
Since 
\[
|\nabla_x R_j(x,y)|\leq c |x-y|^{2j+1-n}, 
\]
for any $j=1,\dots,J-1$, one can infer that
\[
\sum_{j=1}^{J+1} |\nabla_x R_j(x,y)| \leq \sum_{j=1}^{J+1} (d_{\mu}-h)^{2j+1-n} \leq c\,(d_{\mu}-h)^{2-n},
\]
where $d>0$. Regarding the other term, let us first consider a change of variable $\Phi$ as in \cite[Theorem 4.2]{g.alessandrinis.vessella2005} that allows us to flatten the boundary of $\Omega_D$ near the point $O$. Consider $\tilde G_{2,0}(\cdot,y)$ as the Green function which is weak solution to
\begin{equation}\label{greensystem3}
\left\{ \begin{array}{ll}
\mbox{div}(\sigma_{2,0}(\cdot)\nabla \tilde{G}_{2,0}(\cdot,y)) = - \delta(\cdot-y) &\textrm{$\textnormal{in}\quad \Omega_0$},\\
\tilde{G}_{2,0}(\cdot,y)=0 &\textrm{$\textnormal{on}\quad \p \Omega_0\setminus \Sigma_0$},\\
\sigma_{2,0}(\cdot)\nabla \tilde G_{2,0}(\cdot,y)\cdot\nu + i \tilde G_{2,0}(\cdot,y)=0 &\textrm{$\textnormal{on}\quad \Sigma_0,$}
\end{array}
\right.
\end{equation}
where 
\[
\sigma_{2,0}(x)=\left(a_b(0)+(a_{D_2}(0)-a_b(0)\chi_+(x)\right)A(0).
\]
Hence,
\begin{equation}\label{split1}
|\nabla_x(\tilde G_2(x,y)-H_2(x,y))| \leq |\nabla_x(\tilde G_2(x,y) - \tilde G_{2,0}(x,y))| + |\nabla_x(\tilde G_{2,0}(x,y)-H_2(x,y))|
\end{equation}
Regarding the second term on the right-hand side of \eqref{split1}, first notice that $(\tilde G_{2,0}-H_2)(\cdot,y)$ is a weak solution to
\begin{equation}
\begin{cases}
\text{div}_x(\sigma_{2,0}(\cdot)\nabla(\tilde G_{2,0}(\cdot,y)-H_2(\cdot,y))) = 0  &\text{in }B_r(O),\\
\left(\tilde G_{2,0}(\cdot,y)-H_2(\cdot,y)\right)|_{\p B_r(O)} \leq c\,r^{2-n},
\end{cases}
\end{equation}
so that by the Maximum Principle one has that
\[
|\tilde G_{2,0}(x,y)-H_2(x,y)| \leq c r^{2-n}.
\]
Hence, by interior gradient estimates (see for instance \cite{d.gilbargn.s.trudinger2001}), it follows that
\begin{equation}\label{Gest}
|\nabla_x(\tilde G_{2,0}(x,y)-H_2(x,y))| \leq c\,r^{1-n}.
\end{equation}
For the first term on the right-hand side of \eqref{split1}, define
\[
\tilde R_2(x,y) = \tilde G_2(x,y) - \tilde G_{2,0}(x,y).
\]
One can notice that $\tilde R_2(\cdot,y)$ is a weak solution to
\begin{equation*}
\begin{cases}
\text{div}(\sigma_2(\cdot)\nabla \tilde R_2(\cdot,y)) = - \text{div}\big((\sigma_2(\cdot)-\sigma_{2,0}(\cdot))\nabla \tilde G_{2,0}(\cdot,y)\big) &\text{in }\Omega_0\\
\tilde R_2(\cdot,y) = 0 &\text{on }\p \Omega_0\setminus\Sigma_0,\\
\sigma_2(\cdot)\nabla \tilde R_2(\cdot,y)\cdot\nu + i\tilde R_2(\cdot,y)=-(\sigma_{2}(\cdot)-\sigma_{2,0}(\cdot))\nabla \tilde G_{2,0}(\cdot,y)\cdot\nu &\text{on }\Sigma_0.
\end{cases}
\end{equation*}
By the representation formula, the remainder has the form
\begin{align}
-\tilde R_2(x,y) = &\int_{\Omega_0} (\sigma_2(z)-\sigma_{2,0}(z))\nabla_z \tilde G_{2}(z,x)\cdot \nabla_z \tilde G_{2,0}(z,y) \diff z +\notag\\
&+ \int_{\p \Omega_0} \sigma_{2,0}(z)\nabla_z \tilde G_{2,0}(z,y)\cdot \nu \left[ \tilde G_{2,0}(z,x) -\tilde G_{2,0}(z,x)\right] \diff S(z) +\notag\\
&+ \int_{\p\Omega_0} \sigma_2(z)\nabla_z \left[\tilde G_2(z,x) - \tilde G_{2,0}(z,x)\right]\cdot\nu\,\, \tilde G_{2,0}(z,y) \diff S(z).
\end{align}
The integral over $\p\Omega_0$ are bounded from above by a positive constant that depends on the a priori data only. In order to estimate the volume integral, first notice that
\[
|\sigma_2(z)-\sigma_{2,0}(z)|\leq C |z|,
\]
where $C$ is a positive constant depending only on a priori data.

\noindent Hence, by Proposition \ref{prop1},
\begin{align}
\Big| \int_{\Omega_0} &(\sigma_2(z)-\sigma_{2,0}(z))\nabla_z \tilde G_2(z,x)\cdot \nabla_z \tilde G_{2,0}(z,y) \diff z \Big| \leq\notag\\
&\leq c\int_{\Omega_0} |z|\,|z-x|^{1-n}\,|z-y|^{1-n} \diff z,
\end{align}
where $c$ is a positive constant depending on the a priori data only. 
Set  $\tilde h= |x-y|$ and define
%
%
\begin{align}
I_1 &= \int_{B_{4\tilde h}} |z|\,|z-x|^{1-n}\,|z-y|^{1-n} \diff z,\\
I_2 &= \int_{\R^n\setminus B_{4\tilde h}} |z|\,|z-x|^{1-n}\,|z-y|^{1-n} \diff z.
\end{align}
so that
\begin{equation}
|\tilde R_2(x,y)| \leq c(I_1 + I_2).
\end{equation}
First, let us estimate $I_1$. Set $z=\tilde h w$, $t=\frac{x}{\tilde h}$ and $s=\frac{y}{\tilde h}$, then
\begin{align*}
I_1 &= \int_{B_{4}} \tilde h |w|\,|\tilde h(w-t)|^{1-n}\,|\tilde h(w-s)|^{1-n} \tilde h \diff w\\
&= 4\tilde h^{3-n} \int_{B_4} |w-t|^{1-n}\,|w-s|^{1-n}\,\diff w\\
&\leq c \tilde h^{3-n},
\end{align*}
since $\int_{B_4} |w-t|^{1-n}\,|w-s|^{1-n}\,\diff w \leq c$ (see \cite[Chapter 2, section 11]{c.miranda1970}). Hence,
\begin{equation}\label{I1}
I_1 \leq c (h-dist(O,D_2))^{3-n}.
\end{equation}
Regarding the integral $I_2$, notice that since $y=h\nu(O)=-he_n$ in a suitable coordinate system, we might choose $h$ so that
\[
|y| = -h \leq |x-y| = \tilde h
\]
and
\[
|x|\leq |x-y|+|y| \leq 2\tilde h.
\]
For any $z\in \R^n\setminus B_{4\tilde h}$, since $|z|>4\tilde h$, it follows that
\begin{equation*}
\frac{3}{4}|z| \leq |z-y|\quad \text{and}\quad \frac{1}{2}|z| \leq |z-x|.
\end{equation*}
It follows that
\begin{equation}\label{I2}
I_2 \leq \left(\frac{8}{3}\right)^{1-n} \int_{\R^n\setminus B_{4\tilde h}} |z|^{3-2n} \diff z \leq c\tilde h^{3-n} \leq c (h-dist(O,D_2))^{3-n}.
\end{equation}
By \eqref{I1} and \eqref{I2}, we can conclude that
\begin{equation}\label{R2}
|\tilde R_2(x,y)| \leq c |x-y|^{3-n}.
\end{equation}
At this point, in order to determine an upper bound for $\nabla_x \tilde R_2$. Consider a cube $Q\subset D_1\cap B_r(O)$. Since $\tilde G_{2}(\cdot,y)$ and $\tilde G_{2,0}(\cdot,y)$ are H\"older continuous, it follows that
\[
|\nabla \tilde R_2(x,y)|_{\alpha,Q} \leq c |x-y|^{-n}.
\]
By the known inequality,
\[
\|\nabla \tilde R_2(\cdot,y)\|_{L^{\infty}(Q)} \leq \|\tilde R_2(\cdot,y)\|^{\frac{1}{2}}_{L^{\infty}(Q)} |\nabla \tilde R_2(\cdot,y)|^{\frac{1}{2}}_{1,Q},
\]
by \eqref{R2} it follows that
\begin{equation}\label{GR2}
|\nabla R(x,y)| \leq c |x-y|^{1-n+\theta_3}, \quad \text{where }\quad\theta_3 = \frac{1}{2}.
\end{equation}
Collecting \eqref{split1}, \eqref{Gest} and \eqref{GR2} together, we obtain
\begin{equation}\label{estGfinal}
|\nabla_x(\tilde G_2(x,y) - H_2(x,y))| \leq c h^{1-n+\theta_3}.
\end{equation}
\noindent In conclusion, the lower bound of $S_1$ is given by
\begin{equation*}
|S_1(y,y)| \geq c h^{2-n}.
\end{equation*}
Regarding the estimate for $S_2$, from Proposition \ref{prop1} it follows that
\begin{align*}
|S_2(y,y)| &\leq C \int_{D_2} |x-y|^{1-n}|x-y|^{1-n} \diff x \leq C h^{2(1-n)}.
\end{align*}
In conclusion,
\begin{equation}
|f(y,y)|=|S_1(y,y)-S_2(y,y)|\geq |S_1(y,y)| - |S_2(y,y)| \geq c_2 h^{2-n}-c_3 h^{2(1-n)},
\end{equation}
for suitable $c_1>0$ constant depending on the a priori data only.

\end{proof}


\section{The misfit functional}\label{5}
In this section we introduce a stability estimate in terms of the misfit functional that is defined in \eqref{misfit}.

\noindent Let $\Omega$, $D_1$, $D_2$ be, respectively, a bounded domain satisfying \eqref{omegabegin}-\eqref{omegaend} and two inclusions satisfying \eqref{inclusionbegin}-\eqref{inclusionend}. Let $\sigma_1, \sigma_2, q_1. q_2$ be the jump coefficients that correspond to the two inclusions. Let $G_i$ be the Green functions associated to the operator $\mbox{div}(\sigma_{i}(\cdot)\nabla\cdot)+q_i$ for $i=1,2$ so that for $y\in D_0$, $G_i(\cdot,y)$ is a distributional solution to the boundary value problem \eqref{greensystem}. Pick $D_y, D_z\subset \subset D_0$ suitable Lipschitz domains whose intersection is empty. For $(y,z)\in D_0\times D_0$, define
\begin{equation}\label{su0}
S_{\mathcal{U}_0}(y,z) = \int_{\Sigma} \left[ \sigma_{1}(x)\nabla G_1(x,y)\cdot\nu(x)\: G_2(x,z) - \sigma_{2}(x)\nabla G_2(x,z)\cdot\nu(x)\: G_1(x,y)\right]\:dS(x),
\end{equation}
where $\Sigma$ is the open portion of the boundary of $\Omega$ where the measurements are performed.

\noindent The \textit{misfit functional} is defined as
\begin{equation}\label{misfit}
\mathcal{J}(D_1,D_2)=\int_{D_y\times D_z} \left|S_{\mathcal{U}_0}(y,z)\right|^2dy\, dz.
\end{equation}
where $\mathcal{J}:L^{\infty}(\Omega_0)\times L^{\infty}(\Omega_0)\rightarrow \R$ is a functional and encodes the error that occur when one approximates the boundary data induced by $\sigma_1$ and $q_1$ by the one induced by $\sigma_2$ and $q_2$.

\subsection{\normalsize The Stability estimate}	
Let  $\omega:[0,+\infty)\rightarrow [0,+\infty)$ be an non-decreasing function such that for any $t\in (0,1)$, $\omega(t)\leq C\cdot |\ln t|^{-\eta}$, where $\eta\in (0,1)$ is a suitable constant.
\begin{theorem}\label{theoremisfit}

Let $\Omega\subset\R^n$ be a bounded domain satisfying \eqref{omegabegin}-\eqref{omegaend} and let $D_1$, $D_2$ be two inclusions of $C^{2}$ class contained in $\Omega$ satisfying \eqref{inclusionbegin}-\eqref{inclusionend}. Let $\sigma_{1}$ and $\sigma_{2}$ be the anisotropic conductivities satisfying \eqref{condbegin}-\eqref{condend} and let $q_1$ and $q_2$ be the coefficients of the zero order term satisfying \eqref{zerorderbegin}-\eqref{zerorderend}. Let $\Sigma$ be a non-empty open portion of $\p\Omega$. For any $\epsilon\in (0,1)$, if $\mathcal{J}(\sigma_{1},\sigma_{2})<\epsilon$, then 
\begin{equation}
d_H(\p D_1, \p D_2)\leq \omega(\epsilon),
\end{equation}
where $C>0$ is a constant that depends on the a priori data only.
\end{theorem}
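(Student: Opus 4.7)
The plan is to reduce Theorem \ref{theoremisfit} to the scheme already developed for Theorem \ref{theorem}, by identifying the integrand of the misfit with the function $f$ of \eqref{f} and then deriving an $\epsilon$-smallness estimate at a boundary point of $D_1$ via propagation of smallness. First, by the reciprocity identity \eqref{green}, applied to $u_1=G_1(\cdot,y)$ and $u_2=G_2(\cdot,z)$, and using the structural form \eqref{conductivity}--\eqref{potential} of $\sigma_i$ and $q_i$, we get
\[
S_{\mathcal U_0}(y,z)=\int_\Omega(\sigma_1-\sigma_2)\nabla G_1(\cdot,y)\cdot\nabla G_2(\cdot,z)+\int_\Omega(q_2-q_1)G_1(\cdot,y)G_2(\cdot,z)=f(y,z),
\]
for every $(y,z)\in D_y\times D_z\subset\subset D_0\times D_0$. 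Hence the assumption $\mathcal J(D_1,D_2)<\epsilon$ amounts to $\|f\|_{L^2(D_y\times D_z)}^2<\epsilon$.

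Next, I will promote this $L^2$-bound to a pointwise bound at an interior pair $(\bar y,\bar z)$. For fixed $z$, the function $f(\cdot,z)$ is a weak solution of the elliptic equation $\mathrm{div}(a_b A\nabla\cdot)+q_b\cdot=0$ on $\R^n\setminus\overline{\Omega_D}$, and in particular on $D_y$. Caccioppoli and interior $L^\infty$--$L^2$ regularity estimates applied on a ball $B_r(\bar y)\subset\subset D_y$ yield $|f(\bar y,z)|^2\le C\int_{B_r(\bar y)}|f(y,z)|^2\,dy$. Integrating over $z\in D_z$ and then applying the same argument in the $z$ variable (using that $f(\bar y,\cdot)$ likewise satisfies the background elliptic equation on $D_z$) gives
\[
|f(\bar y,\bar z)|^2\le C\,\|f\|_{L^2(D_y\times D_z)}^2\le C\epsilon,
\]
with $C$ depending on the a priori data and the distances of $\bar y,\bar z$ from $\partial D_y,\partial D_z$.

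With this pointwise bound in hand at a pair of interior points of $D_0$, I can replace the starting estimate \eqref{eqB} of Proposition \ref{prop-bis} (which came from $d(\mathcal C_1,\mathcal C_2)<\epsilon$) by the one just obtained, and run verbatim the chain-of-balls / three-sphere iteration of Proposition \ref{prop-bis}. This is carried out in two stages: first, with $y=\bar y$ frozen, I propagate smallness in the $z$-variable along a path in $\mathcal G$ ending in a ball centered at $z=y_h:=h\nu(O)$, where $O\in\partial D_1\cap\partial\Omega_D$ is the point furnished by Lemma \ref{metric-lemma}; then, with $z=y_h$ fixed, I propagate smallness in the $y$-variable along an analogous path ending at $y=y_h$. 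The outcome is the same upper bound as in Proposition \ref{prop-bis},
\[
|f(y_h,y_h)|\le c_1\,\frac{\epsilon^{Bh^F}}{h^A},
\]
with constants depending only on the a priori data. Combining this with the lower bound $|f(y_h,y_h)|\ge c_2h^{2-n}-c_3h^{2-2n}$ of Proposition \ref{prop5} and optimizing in $h$ exactly as in the final step of the proof of Theorem \ref{theorem} (distinguishing the cases $\epsilon<\epsilon_1$ and $\epsilon\ge\epsilon_1$, and using Lemmas \ref{lemma1} and \ref{metric-lemma} to convert the resulting bound into one on the Hausdorff distance) gives the logarithmic estimate $d_H(\partial D_1,\partial D_2)\le C|\ln\epsilon|^{-\eta}$.

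The main technical point to check carefully is the $L^2$-to-$L^\infty$ bootstrap together with the sequential propagation in two independent variables: one must make sure that the elliptic regularity constants depend only on the a priori data, that the iterated three-sphere argument in the $y$-variable, performed with $z=y_h$ fixed (i.e. with a pole approaching $\partial\Omega_D$), still has radii bounded below independently of $h$ until one enters the cone at $O$, and that the two successive applications of the three-sphere inequality combine into an exponent of the form $\epsilon^{Bh^F}$ rather than a worse one. Once this is arranged, the scheme of Section \ref{3} applies mutatis mutandis and delivers the claimed estimate.
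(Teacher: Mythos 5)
Your proposal is correct and follows essentially the same route as the paper: identify $S_{\mathcal{U}_0}(y,z)=f(y,z)$ via the Green identity \eqref{green}, convert the $L^2$ smallness of the misfit on $D_y\times D_z$ into a pointwise bound at interior points of $D_0$, and then rerun the propagation-of-smallness machinery of Proposition \ref{prop-bis} together with the lower bound of Proposition \ref{prop5} and the optimization in $h$ from the proof of Theorem \ref{theorem}. The only cosmetic difference is that the paper outsources the $L^2$-to-pointwise step to the cited estimate (3.23) of Foschiatti--Gaburro--Sincich, whereas you derive it directly from interior $L^\infty$--$L^2$ elliptic estimates, which is the same underlying idea.
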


\noindent The proof of Theorem \ref{theoremisfit} follows the lines of of the proof of Theorem \ref{theorem}, but instead of Proposition \ref{prop5} we need to introduce Proposition \ref{prop4}. Before stating it, notice that by Green's identity and \eqref{green} we have that  \eqref{su0} that can be rewritten as
\begin{equation}\label{su0new}
S_{\mathcal{U}_0}(y,z) = \int_{\Omega} \big(\sigma_{2}(x)-\sigma_{1}(x)\big) \nabla G_1(x,y)\cdot\nabla G_2(x,z) + \int_{\Omega} \big(q_{1}(x)- q_{2}(x)\big) G_1(x,y) G_2(x,z) \diff x.
\end{equation}
Hence, by the definition of \eqref{S1}, \eqref{S2} and \eqref{f}, we have that 
\[
S_{\mathcal{U}_0}(y,z) = f(y,z).
\]

\begin{proposition}\label{prop4}
Under the same assumptions of Theorem \ref{theoremisfit}, for $\epsilon\in(0,1)$, if $\mathcal{J}(\sigma_{D_1},\sigma_{D_2})<\epsilon$, then 
\[
|f(y,y)| \leq C_1 \frac{\epsilon^{Bh^F}}{h^A}, 
\]
where $A\in (0,1)$, $C_1, B, F$ are positive constants that depend on the a priori data only, $y=h\nu(O)$ where
\[
0<h\leq \bar{d}\left(1-\frac{\sin\theta_0}{4}\right) \mbox{ for }\,\theta_0=\arctan\frac{1}{L},
\]
.
\end{proposition}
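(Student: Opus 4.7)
The plan is to mimic the proof of Proposition \ref{prop-bis}, with the only non-trivial modification being the derivation of a suitable \emph{pointwise} starting bound for $f$ out of the integral smallness $\mathcal{J}(D_1,D_2)<\varepsilon$. Indeed, the identity $S_{\mathcal{U}_0}(y,z)=f(y,z)$ recorded just after \eqref{su0new} rewrites the hypothesis as
\begin{equation*}
\|f\|_{L^2(D_y\times D_z)}^2 < \varepsilon,
\end{equation*}
which replaces the bound \eqref{eqB} used as the input of the propagation-of-smallness chain in Proposition \ref{prop-bis}.

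First I would upgrade this $L^2$ control into an $L^\infty$ control on slightly shrunken sets $D_y'\subset\subset D_y$ and $D_z''\subset\subset D_z'\subset\subset D_z$. For every fixed $w\in D_z$, the map $y\mapsto f(y,w)$ is a weak solution of the equation \eqref{eqA} in $\R^n\setminus\bar\Omega_D$, with coefficients $a_b A$, $q_b$ satisfying the a priori bounds. Since $D_y,D_z\subset\subset D_0\subset\subset\R^n\setminus\bar\Omega$, standard interior $L^\infty$ bounds (Caccioppoli inequality combined with Moser iteration, or De Giorgi--Nash--Moser) yield
\begin{equation*}
|f(\bar y,w)|^2 \leq \frac{C}{|D_y|}\int_{D_y}|f(y,w)|^2\,\diff y,\qquad \bar y\in D_y'.
\end{equation*}
Integrating in $w$ over $D_z'$ and then applying the same interior estimate in the $w$-variable (using that $w\mapsto f(\bar y,w)$ solves the same elliptic equation) produces
\begin{equation*}
|f(\bar y,\bar w)|\;\leq\; C\,\|f\|_{L^2(D_y\times D_z)}\;\leq\; C\,\sqrt{\varepsilon},\qquad \bar y\in D_y',\ \bar w\in D_z'',
\end{equation*}
with a constant depending only on the a priori data and on the geometry of $D_y,D_z$.

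Once this pointwise bound is available, I would run the three-spheres propagation argument used in the proof of Proposition \ref{prop-bis} verbatim. Namely, pick $\bar y\in D_y'$ and $\bar w\in D_z''$ as initial points, propagate smallness of $f(\bar y,\cdot)$ from $\bar w$ along a path in $\mathcal{G}$ up to the point $Q=\bar d\nu(O)$ (using Lemma \ref{metric-lemma} to guarantee that a uniform tubular neighborhood of the path stays inside $\R^n\setminus\bar\Omega_D$), then iterate the three-spheres inequality along the shrinking cone of balls centered on the segment joining $Q$ to $O$, and finally repeat the same double argument in the first variable to reach the diagonal point $y_h=h\nu(O)$. Since the input is now $\sqrt\varepsilon$ instead of $\varepsilon$, the final exponent is halved, which is harmless and can be absorbed in the constant $B$ in
\begin{equation*}
|f(y_h,y_h)| \leq C_1\,\frac{\varepsilon^{Bh^F}}{h^A}.
\end{equation*}

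The main technical point, and essentially the only departure from Proposition \ref{prop-bis}, is the first step: extracting an $L^\infty$ bound from an $L^2$ bound through interior regularity applied \emph{separately in each variable}. This requires $D_y,D_z$ to be chosen so that their interiors contain balls of radius comparable to $r_0$ lying well inside $D_0$, ensuring that the constants in the Caccioppoli/Moser estimates depend only on the a priori data. Everything else, including the geometric construction of the chain of balls and the estimate of the number of iterations $k(h)\leq c|\ln h|$ leading to the factor $h^F$ with $F=c|\ln h|$, is unchanged.
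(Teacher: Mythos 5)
Your proposal is correct and follows essentially the same route as the paper: reduce the misfit hypothesis to a pointwise smallness bound for $f$ at points outside $\Omega$, then rerun the propagation-of-smallness machinery of Proposition \ref{prop-bis} unchanged. The only difference is that the paper obtains the pointwise bound by citing an estimate from an earlier work, whereas you derive it explicitly via interior $L^\infty$ elliptic estimates applied separately in each variable — a self-contained filling-in of the same step, and the resulting $\sqrt{\epsilon}$ in place of $\epsilon$ is indeed harmlessly absorbed into $B$.
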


\begin{proof}[Proof of Proposition \ref{prop4}]
As in the previous proofs, we drop the indices and consider a generic inclusion $D$. Fix $\bar{y}\in D_0$, then $f(\bar{y},\cdot)$ is a weak solution to
\[
\text{div}_z(\sigma(\cdot)\nabla_z f(\bar{y},\cdot)) + q(\cdot) f(\bar{y},\cdot) = 0, \qquad \text{in }\Omega^c_D.
\]
Since in this case $f(\bar{y},z)=S_{\mathcal{U}_0}(\bar{y},z)$, by \cite[(3.23)]{s.foschiattie.sincichr.gaburro2021} we have
\[
\max_{z\in (D_0)_r} S_{\mathcal{U}_0}(\bar{y},z) \leq c \mathcal{I}(D_1, D_2)
\]
where $c$ depends on the a priori data only. Then
\begin{equation}
f(\bar{y},z)\leq c\varepsilon.
\end{equation}
The remaining part of the proof follows the line of the proof of Proposition \ref{prop-bis}.
\end{proof}


\section*{Acknowledgments}
The work of SF and ES was performed under the PRIN grant No. 201758MTR2-007. ES has also been supported by Gruppo Nazionale per l'Analisi Matematica, la Probabilità e le loro applicazioni (GNAMPA) by the grant "Problemi inversi per equazioni alle derivate parziali".


\end{document}